\newtheorem{theo}{Theorem}
\newtheorem{lem}{Lemma}
\newtheorem{cor}{Corollary}
\newtheorem{rem}{Remark}
\begin{document}

\title{The Mahler measure and the L-series of a singular $K3$ surface}

\author{Marie Jos\'e Bertin}

\keywords{Modular Mahler's Measure, Eisenstein-Kronecker Series, N\'eron model, L-series of K3 hypersurfaces}

\email{bertin@math.jussieu.fr}
\date{\today}

\curraddr{Universit\'e Pierre et Marie Curie (Paris 6), Institut
de Math\'ematiques, 175 rue du Chevaleret, 75013 PARIS, France}

\begin{abstract}
We present the first example of a polynomial $P_{10}
$ defining a singular $K3$ surface $Y_{10}$ whose Mahler measure is expressed in terms of the Mahler measure of the faces of its Newton polyhedron and the $L$-series of the $K3$-surface. 
\end{abstract}

\maketitle
\section{Introduction}
The logarithmic Mahler measure $m(P)$ of a Laurent polynomial $P\in \mathbb{C}[X_1^{\pm 1} ,...,X_n^{\pm 1} ]$ is defined by

$$
m(P):=\frac {1}{(2\pi i)^n} \int_{\mathbb{T} ^n}\log \vert P(x_1,...,x_n)\vert\frac{dx_1}{x_1}...\frac{dx_n}{x_n}
$$
where $\mathbb{T}^n$ is the $n$-torus $\{(x_1,...,x_n)\in \mathbb{C}^n /
\vert x_1\vert =...=\vert x_n\vert =1\}$

and has a strange link with Calabi-Yau varieties. Its story can be explained briefly considering the Mahler measure of polynomials
$$x_0+x_1+x_2+\hdots +x_n.$$
The known results are $m(x_0+x_1)=0$, $m(x_0+x_1+x_2)=L'(\chi_{-3},-1)$ (Smyth)\cite{Sm}, $m(x_0+x_1+x_2+x_3)=\frac {7}{2\pi^2}\zeta(3)$ (Smyth)\cite{Sm}, $m(x_0+x_1+x_2+x_3+x_4)$ conjectured to be related to $L'(f,-1)$ where $f$ is a cusp form of weight $3$ and conductor $15$ (Rodriguez-Villegas) \cite{Boy}. The cusp form $f$ is equal to
$$f=\sum_{m,n\in \mathbb Z}q^{m^2+mn+4n^2}\eta(q)\eta(q^3)\eta(q^5)\eta(q^{15}).$$
 
This conjecture can be verified to a high accuracy \cite{RVTV} and $L(f,s)$ is equal to the $L$-series of the $K3$-surface \cite{PTV} which is the minimal resolution of singularities of the surface in $\mathbb P^4$ given by the equations
$$x_0+x_1+x_2+x_3+x_4=0$$
$$\frac {1}{x_0}+\frac {1}{x_1}+\frac {1}{x_2}+\frac {1}{x_3}+\frac {1}{x_4}=0.$$ 
Finally, $m(x_0+x_1+x_2+x_3+x_4+x_5)$ is conjectured to be related to $L'(g,-1)$ where $g$ is a cusp form of weight $4$ and conductor $6$ (Rodriguez-Villegas) \cite{Boy}. It is also verified to a high accuracy and the $L$-series is equal to the $L$-series of the Barth-Nieto quintic \cite{HVS}. 
\bigskip

In fact, these two last guesses of Rodriguez-Villegas have been made possible thanks to a deep and intriguing insight of Maillot \cite{Ma} concerning the Mahler measure of non-reciprocal polynomials i.e. polynomials  $P(x_1, \hdots ,x_n)$ such that
$P(1/x_1,\hdots , 1/x_n)/P(x_1, \hdots ,x_n)$ is not a monomial. The logarithmic Mahler measure of a polynomial can be interpreted as the integration of a differential form on a variety and when the polynomial is non-reciprocal, the variety in question is an algebraic variety of dimension less than $n$. Moreover, the expression of the Mahler measure should be encoded in the cohomology of this variety. In the previous examples you may observe that the variety is a curve of genus $0$ if $n=2$, a product of three planes if $n=3$, a $K3$-surface if $n=4$ and the Barth-Nieto quintic if $n=5$.
\bigskip

On the opposite side, when the polynomial is reciprocal, Deninger\cite{D} is the pioneer and my result here concerns generalizations of Deninger's result. Note that Rodriguez-Villegas's conjectures require results for reciprocal polynomials. That is for me a strong motivation to compute many examples in that direction.

\bigskip
Let $P(x,y)$ be a polynomial in two variables with integer coefficients. Suppose that $P$ does not vanish on 2-torus $\mathbb T^2$ with $\mathbb T^2 =\{(x,y)\in \mathbb C^2 \,\, / \mid x \mid =\mid y \mid =1 \}$. If $P$ is associated to an elliptic curve $E$, such that the polynomials of the faces $P_F$ of $P$ (defined in terms of the Newton polygon of $P$) are cyclotomic, the following relation between the logarithmic Mahler measure of $P$, $m(P)$, and the $L$-series of the elliptic curve $E$
$$m(P)\doteq \frac {N}{4\pi ^2}L(E,2)=L'(E,0)$$
is conjectured to hold. ( $\doteq $ means equality up to a rational coefficient and $N$ is the conductor of $E$ \cite{D} \cite {Bo} \cite {RV1} \cite {RV2}).
After Deninger's guess of such explicit formulae \cite{D}, Boyd gave a lot of examples of the same type where the rational coefficient was determined numerically \cite{Bo}. Then Rodriguez-Villegas proved some formulae when the corresponding elliptic curve $E$ has complex multiplication \cite{RV1} \cite{RV2}.
His proof uses the expression of the logarithmic Mahler measure of a polynomial in terms of Eisenstein-Kronecker series. For more examples see \cite {Ber1}.
Recently Brunault \cite{Br}, Lalin and Rogers \cite{L-R}  gave also some proofs of such relations.

In a previous paper \cite{Ber3} we proved the analog of these formulae for three particular $P$ of the same family $(P_k)$
$$P_k=x^2yz+xy^2z+xyz^2+t^2(xy+xz+yz)-kxyzt.$$
These particular $P$ define singular modular $K3$ hypersurfaces and their Mahler measure is only related to the $L$-series of the corresponding $K3$-surface. 

This paper deals with the polynomial
$$P_{10}=x^2yz+xy^2z+xyz^2+t^2(xy+xz+yz)-10xyzt,$$
defining the $K3$-hypersurface $Y_{10}$.
This polynomial belongs to the first family $P_k$ of $K3$ hypersurfaces whose logarithmic Mahler measures have been studied in \cite {Ber2}.  As explained in \cite {Ber2}, the derivative of the logarithmic Mahler measure with respect to the parameter is a period of the corresponding $K3$ hypersurface; thus, only the transcendental lattice is relevant for the Mahler measure. The generic member of the family has Picard number $19$. Special members of the family have Picard number $20$, thus a $2$-dimensional transcendental lattice. The polynomial $P_{10}$ is among these special members. The $L$-series of $Y_{10}$ corresponds to just a $2$-dimensional piece of the $22$-dimensional $H^2$ of the $K3$ hypersurface. Since this $K3$ hypersurface has Picard number $20$, this $2$-dimensional piece is its transcendental lattice. The difference with the previous examples is the following. For computing the determinant of the Picard lattice, we need a more elaborate desingularization of the singular fibers. For that purpose, we use the  N\'eron's model \cite{N}. But the main difficulty was to find an infinite section. This section is defined over $\mathbb Q(\sqrt{-3})$ and was discovered by Lecacheux \cite{Le}.
Moreover, the formula giving the Mahler measure of $P_{10}$ is more tricky and we use an idea due to Zagier \cite{Za} and explained in \cite{Ber2}. This idea led me to discover how to separate in the expression of the Mahler measure, the contribution of the face and the modular part. We are also indebted to Boyd who guessed numerically the following relation between the Mahler measure of two polynomials of the family, I mean $P_2$ and $P_{10}$
$$m(P_{10})\overset {?}{=} 2d_3+3m(P_2).$$
In fact Boyd remarked that the two moduli $\tau_{10}$ and $\tau_2$ (defined below in section 2) of the corresponding polynomials $P_{10}$ and $P_2$ belong to the same imaginary quadratic field $\mathbb Q(\sqrt{-2})$. So he suspected a relation between the corresponding Mahler measures.

The aim of the paper is to prove the following theorem.

\begin{theo}
Let $Y_{10}$ the $K3$ hypersurface associated to the polynomial $P_{10}$

$$P_{10}=x^2yz+xy^2z+xyz^2+t^2(xy+xz+yz)-10xyzt.$$

If $L(Y_{10},s)$ denotes the $L$-series of the hypersurface $Y_{10}$, one gets the following relations.

1) 
$$L(Y_{10},3)=\frac {1}{2} \sum'_{k,m} \frac {k^2-2m^2}{(k^2+2m^2)^3}=L(f,3),$$
where $f$ is the $CM$-newform of weight $3$ and level $8$ given in [\cite{Sch1}, Table 1],
$$f=q-2q^2-2q^3+4q^4+4q^6-8q^8-5q^9+14q^{11}-8q^{12}+16q^{16}+2q^{17}+10q^{18}-34q^{19}+\cdots .$$

2) Let define as in \cite{Ber3}
$$d_3:=\frac {3\sqrt{3}}{4\pi}L(\chi_{-3},2)=\frac {2\sqrt{3}}{\pi^3}\sum'_{m,k} \frac {1}{(m^2+3k^2)^2}.$$
The Mahler measure and the $L$-series satisfy the equality

$$m(P_{10})=2d_3+\frac {1}{9}\frac {\left |\det T_{Y_{10}} \right |^{3/2}}{\pi^3} L(Y_{10},3),$$
where $T_{Y_{10}}$ denotes the transcendental lattice of the surface $Y_{10}$.
\end{theo}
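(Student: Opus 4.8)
The proof splits along the two assertions, but both draw on a single analytic input established for the entire family in \cite{Ber2}: the expression of $m(P_k)$ as (the real part of) a weight-three Eisenstein--Kronecker series attached to the modulus $\tau_k$, which arises by integrating the period relation $\frac{d}{dk}m(P_k)=(\text{period of }Y_k)$. The plan is therefore to first determine the arithmetic of $Y_{10}$ — its transcendental lattice and the CM newform governing it — and then to substitute $\tau=\tau_{10}\in\mathbb{Q}(\sqrt{-2})$ into that series and split it into a face piece and a modular piece.

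\textbf{Part 1.} Since $Y_{10}$ has Picard number $20$, the transcendental lattice $T_{Y_{10}}$ is a rank-two positive-definite lattice, and by the modularity of singular $K3$ surfaces (Livn\'e, Sch\"utt) the two-dimensional Galois representation it carries is attached to a CM newform of weight $3$. I would first read off, from the modulus $\tau_{10}$, that the CM field is $\mathbb{Q}(\sqrt{-2})$; the discriminant of $T_{Y_{10}}$ then fixes the level, and comparison with Sch\"utt's classification \cite{Sch1} isolates the form $f$ of level $8$. The chain of equalities $L(Y_{10},3)=\tfrac12\sum'(k^2-2m^2)/(k^2+2m^2)^3=L(f,3)$ is then obtained by writing the $L$-series of $T_{Y_{10}}$ as the Hecke $L$-function of the associated Gr\"ossencharacter of $\mathbb{Q}(\sqrt{-2})$, expanding it as a sum over ideals (whose norms are the values of the form $k^2+2m^2$), and matching Euler factors with those of $f$.

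\textbf{Computing $\det T_{Y_{10}}$ (the main obstacle).} The constant in Part 2 requires $|\det T_{Y_{10}}|$, which by unimodularity of $H^2$ of a $K3$ equals $|\det NS(Y_{10})|$. I would compute the latter from an elliptic fibration on $Y_{10}$ via the Shioda--Tate formula, summing the lattices of components of the reducible fibres together with the Mordell--Weil contribution. Two points make this delicate and constitute the heart of the argument: the reducible fibres must be resolved carefully enough to capture their full component lattices, for which I would use the N\'eron model \cite{N} in place of a crude resolution; and the Mordell--Weil rank must be pinned down by actually exhibiting a section of infinite order, namely Lecacheux's section \cite{Le} defined over $\mathbb{Q}(\sqrt{-3})$, whose height then feeds into the determinant. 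I expect this to be the hardest step, since $\det NS$ is sensitive to the exact component groups; a welcome cross-check is that the resulting discriminant must be compatible with the level $8$ and the field $\mathbb{Q}(\sqrt{-2})$ found in Part 1.

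\textbf{Part 2.} With the lattice and newform in hand I would return to the Eisenstein--Kronecker expression for $m(P_{10})$ evaluated at $\tau_{10}$. Boyd's numerical guess $m(P_{10})\overset{?}{=}2d_3+3m(P_2)$ signals that the series breaks into two arithmetically distinct parts, and the decisive device — due to Zagier \cite{Za} and developed in \cite{Ber2} — is to separate, inside the Kronecker--Eisenstein series, the contribution of the faces of the Newton polyhedron from the genuinely transcendental contribution. Performing this separation, the face part reduces to the Mahler measures of the edge polynomials, which by Smyth's evaluation \cite{Sm} collect into the term $2d_3$ carrying the field $\mathbb{Q}(\sqrt{-3})$, while the complementary part is exactly the weight-three Hecke $L$-value $L(Y_{10},3)$ of Part 1. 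The final task is bookkeeping of normalizations: tracking the CM period and the lattice covolume through the splitting must convert the analytic prefactor into $\tfrac19\,|\det T_{Y_{10}}|^{3/2}/\pi^3$, yielding the stated identity; re-expressing $3m(P_2)$ through the same machinery then reconciles the clean formula with Boyd's observation.
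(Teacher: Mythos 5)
Your route to the determinant of the transcendental lattice and your Part 2 are essentially the paper's own proof: an elliptic fibration on $Y_{10}$ (the quadratic base change of Beauville's surface), N\'eron models at the bad fibres to identify all components, Lecacheux's section over $\mathbb{Q}(\sqrt{-3})$ as the indispensable infinite section, and Shioda--Tate giving $\det NS(Y_{10})=-72$, hence $|\det T_{Y_{10}}|=72$; then the Eisenstein--Kronecker expression of \cite{Ber2} evaluated at the modulus $\tau_{10}$ with $2\tau_{10}^2+1=0$, split by Zagier's device into the term $2d_3$ and a piece proportional to $\sum'(m^2-2\kappa^2)/(m^2+2\kappa^2)^3$, i.e.\ $3m(P_2)$, with the prefactor converted via $72^{3/2}=432\sqrt{2}$. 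One small correction of emphasis: in the paper the term $2d_3$ is not obtained by applying Smyth's theorem to face polynomials; it falls out of the lattice-sum factorization $A(s)=2L(\chi_{-3},s)L(\chi_{24},s)$ in Zagier's lemma (the ``face'' reading is an interpretation, not a step of the proof), and the other half of the splitting rests on the Euler factor of $f$ at $3$, i.e.\ on $a_3=-2$.

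The genuine gap is in Part 1. Knowing that $T_{Y_{10}}$ has rank $2$ and discriminant $72$, so that the CM field is $\mathbb{Q}(\sqrt{-72})=\mathbb{Q}(\sqrt{-2})$, does \emph{not} fix the level or the newform: together with Sch\"utt's classification it determines the form only up to quadratic twist, since the same complex surface admits different models over $\mathbb{Q}$ realizing twisted Galois representations. A twist $f\otimes\chi_D$ has a different level and, crucially, a different value at $s=3$, so your constant in Part 2 would come out wrong; and since $Y_{10}$ has bad reduction at $2$ and $3$, a twist by, say, $\chi_{12}$ cannot be excluded by lattice data alone (already $a_{17}$ changes sign under it). Pinning down the twist requires arithmetic input from $Y_{10}$ over finite fields, and this is precisely what the paper supplies and your plan omits: it determines how Frobenius acts on the algebraic cycles (the components over $\alpha,\beta$ and the section $\Sigma$ are permuted according to $(\tfrac{6}{p})$ and $(\tfrac{-3}{p})$, giving the factor $(1-pT)^{17}(1-(\tfrac{6}{p})pT)^2(1-(\tfrac{-3}{p})pT)$ of $P_2$), counts points $N_q(Y_{10})$ modulo $8$ using the $(x,y,z)\mapsto(x^{\pm1},y^{\pm1},z^{\pm1})$ symmetry, and then runs the Stienstra--Beukers alternative ($A_p=\pm2(a^2-2b^2)$ or $0$ or $\pm p,\pm 2p$, with $\lambda_1\lambda_2=\pm p^2$) to resolve all signs, i.e.\ exactly the twist ambiguity. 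Your phrase ``matching Euler factors with those of $f$'' presupposes the Euler factors of $Y_{10}$ have been computed, which \emph{is} the point-counting step; without it (or a Livn\'e/Faltings--Serre finite check, which again consumes point counts) the identification $L(Y_{10},3)=\tfrac12\sum'(k^2-2m^2)/(k^2+2m^2)^3=L(f,3)$ is not proved.
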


\begin{rem}
The $L$-series of the two K3-surfaces $Y_2$ and $Y_{10}$ corresponding respectively to the polynomials $P_2$ and $P_{10}$ are the same. So, by Tate's conjecture there would be an algebraic correspondence between $Y_2$ and $Y_{10}$ \cite{Yu}.
\end{rem}

\section{Previous results}

Let $P_k$ denote the Laurent polynomials

$$
P_k=X+\frac {1}{X}+Y+\frac {1}{Y}+Z+\frac {1}{Z}-k
$$

We recall the following theorem  \cite {Ber2}.
\begin{theo}
Let us write $k=t+\frac {1}{t}$ for 
$$
t=(\frac {\eta (\tau) \eta (6\tau)}{\eta
(2\tau)\eta(3\tau)})^6,$$
where $\eta$ is the Dedekind eta function

$$\eta (\tau)=e^{\frac {\pi i \tau}{12}} \prod_{n\geq 1}(1-e^{2\pi i n\tau})$$
and 
$$q=\exp ^{2\pi i \tau}.$$

Then 
$$
\begin{aligned}  m(P_k)=&\frac {\Im \tau}{8 \pi ^3} \sum'_{m,n}-4\left( 
2\Re \frac {1}{(m\tau+n)^3(m\bar{\tau}+n)}+\frac {1}{(m\tau+n)^2(m\bar{\tau}+n)^2}\right) \\
 &+16\left( 2\Re \frac {1}{(2m\tau+n)^3(2m\bar{\tau}+n)}+\frac {1}{(2m\tau+n)^2(2m\bar{\tau}+n)^2}\right) \\
 &-36\left( 2\Re \frac {1}{(3m\tau+n)^3(3m\bar{\tau}+n)}+\frac {1}{(3m\tau+n)^2(3m\bar{\tau}+n)^2}\right) \\
 &+144\left( 2\Re \frac {1}{(6m\tau+n)^3(6m\bar{\tau}+n)}+\frac {1}{(6m\tau+n)^2(6m\bar{\tau}+n)^2}\right). 
 \end{aligned}
 $$
\end{theo}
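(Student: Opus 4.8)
The plan is to prove the two assertions together, since both rest on the same two pillars: an explicit determination of the transcendental lattice $T_{Y_{10}}$, and the evaluation of the Eisenstein--Kronecker series of Theorem 2 at the $CM$ modulus $\tau_{10}$, which lies in $\mathbb Q(\sqrt{-2})$. I would begin with the geometry behind part 1. The family $(P_k)$ carries an elliptic fibration, and $Y_{10}$ is a special member with Picard number $20$, hence with a rank-$2$ transcendental lattice. As announced in the introduction, the naive resolution of the singular fibers is not fine enough to read off their contribution to the N\'eron--Severi group, so I would pass to the N\'eron model \cite{N} to obtain the correct local configurations. The Shioda--Tate formula then relates the Picard number to the fiber data and to the rank of the Mordell--Weil group, and the decisive ingredient is the infinite section defined over $\mathbb Q(\sqrt{-3})$ discovered by Lecacheux \cite{Le}, whose class supplies the generator needed to span a rank-$20$ lattice. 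Assembling the Gram matrix from the fiber class, the zero section, the components of the reducible fibers, and the Lecacheux section, and orthogonally complementing inside $H^2$, yields $T_{Y_{10}}$ and in particular $\det T_{Y_{10}}$.

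With $T_{Y_{10}}$ in hand, the arithmetic half of part 1 follows from the modularity of singular $K3$ surfaces over $\mathbb Q$ (Livn\'e): the Galois action on the transcendental part is induced from a Hecke Gr\"ossencharacter $\psi$ of an imaginary quadratic field, so $L(Y_{10},s)=L(f,s)$ for a $CM$ newform $f$ of weight $3$. The discriminant of $T_{Y_{10}}$ forces the $CM$ field to be $\mathbb Q(\sqrt{-2})$ and the level to be $8$, pinning $f$ down as the form of [\cite{Sch1}, Table 1]. Writing an integer $\alpha=k+m\sqrt{-2}$ of $\mathbb Z[\sqrt{-2}]$, so that $N(\alpha)=k^2+2m^2$ and $\Re(\alpha^2)=k^2-2m^2$, the Hecke $L$-function of $\psi(\alpha)=\alpha^2$ at $s=3$ equals $\tfrac12\sum'_{k,m}\frac{k^2-2m^2}{(k^2+2m^2)^3}$, the factor $\tfrac12$ accounting for the two units $\pm1$; this yields the full chain $L(Y_{10},3)=\tfrac12\sum'_{k,m}\frac{k^2-2m^2}{(k^2+2m^2)^3}=L(f,3)$.

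For part 2 I would specialize Theorem 2 to $\tau=\tau_{10}$. Since $\tau_{10}$ is a $CM$ point of $\mathbb Q(\sqrt{-2})$, each of the four Eisenstein--Kronecker sums attached to the sublattices indexed by $1,2,3,6$ becomes a Hecke $L$-value, the summand $2\Re\frac{1}{(m\tau+n)^3(m\bar\tau+n)}+\frac{1}{(m\tau+n)^2(m\bar\tau+n)^2}$ being of the second-Kronecker-limit type that feeds into such a value. Following the idea of Zagier \cite{Za} as developed in \cite{Ber2}, I would regroup the four sums so as to isolate a ``face'' piece, governed by the Newton-polygon faces and collapsing into the $\mathbb Q(\sqrt{-3})$ quantity $d_3$ of \cite{Ber3}, from a ``modular'' piece collapsing into $L(Y_{10},3)$. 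Matching the normalizing scalars---the prefactor $\Im\tau_{10}/8\pi^3$ of Theorem 2 against the covolume-type factor $|\det T_{Y_{10}}|^{3/2}/\pi^3$ of the lattice sum, and tracking the integer weights $-4,16,-36,144$---then produces the asserted coefficients $2$ and $\tfrac19$, giving $m(P_{10})=2d_3+\tfrac19\frac{|\det T_{Y_{10}}|^{3/2}}{\pi^3}L(Y_{10},3)$.

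The main obstacle is twofold. Geometrically, obtaining the exact value of $\det T_{Y_{10}}$ requires the full N\'eron-model desingularization together with an accurate height computation for Lecacheux's section, and any slip there corrupts the normalizing constant of part 2. Analytically, the delicate step is Zagier's separation of a single Eisenstein--Kronecker expression into a face part over $\mathbb Q(\sqrt{-3})$ and a modular part over $\mathbb Q(\sqrt{-2})$: because these two fields are distinct, the split is not forced by any lattice decomposition but must be engineered through the arithmetic relations among the four Kronecker sums, and it is exactly this regrouping that lets one read off the separate contributions of the face and of the $L$-series.
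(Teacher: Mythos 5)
You have proved the wrong statement. The theorem you were given is Theorem 2: the closed formula expressing $m(P_k)$ for \emph{every} member of the family as a combination of four Kronecker--Eisenstein double series at the modulus $\tau$ attached to $k$ through the eta quotient $t=\left(\eta(\tau)\eta(6\tau)/\eta(2\tau)\eta(3\tau)\right)^6$. In the paper this is a recalled result, with proof in \cite{Ber2} by the Rodriguez-Villegas method, and that proof has nothing to do with $Y_{10}$ or its transcendental lattice: one uses that $t$ is a Hauptmodul for the relevant group of level $6$, shows that the derivative of $m(P_k)$ with respect to the parameter is a period of the $K3$ hypersurface, i.e.\ a weight-$3$ modular form on $\Gamma_0(6)$ expressible through Eisenstein series attached to the divisors $1,2,3,6$ of the level (this is where the coefficients $-4,16,-36,144$ and the four sublattices $m\tau+n$, $2m\tau+n$, $3m\tau+n$, $6m\tau+n$ come from), then integrates the resulting $q$-expansion back, fixing the constant of integration by the asymptotics of $m(P_k)$ for large $k$; the double series and the prefactor $\Im\tau/8\pi^3$ arise from the Fourier development of the Eisenstein--Kronecker series.

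Your proposal never engages with any of this. It sketches instead the paper's Theorem 1 --- the N\'eron-model \cite{N} and Shioda--Tate computation of $\det T_{Y_{10}}$ using Lecacheux's section \cite{Le}, the modularity identification $L(Y_{10},s)=L(f,s)$, and Zagier's splitting \cite{Za} of the specialized series into $2d_3$ plus a multiple of $L(Y_{10},3)$ --- and you explicitly write that you would ``specialize Theorem 2 to $\tau=\tau_{10}$'', i.e.\ you take the statement to be proved as an input. Nothing in your text establishes the displayed identity, either for general $k$ or even at $k=10$ independently of Theorem 2: there is no construction of the modular parametrization, no period computation for $dm(P_k)/dk$, and no derivation of the weights $-4,16,-36,144$ or of the normalization $\Im\tau/8\pi^3$. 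As a proof of the stated theorem the proposal is therefore empty, whatever its merits as an outline of the paper's main theorem, which logically sits downstream of the result you were asked to prove.
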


\section{The N\'eron's model}
We recall the N\'eron's desingularization only for semi-stable singular fibers. For the other cases we refer to \cite{N}. From now on, we use N\'eron's notations. The key tool is the following theorem.
\begin{theo}(N\'eron)

Let an elliptic curve defined over $\mathbb C[s]$, given by a Weierstrass model $E_s$ and denote $v$ the $s$-adic valuation. 
Suppose that $E_0$ has a double point with distinct tangents and $v(j(E_s))=-m<0$ ($\Longleftrightarrow E_0$ is singular of type $I_m$ in Kodaira's classification). Then, for every integer $l>m/2$, there exists a Weierstrass model $\mathcal E_s$ deduced from $E_s$ by a transformation of the form
\begin{align}
X &=x+qz \\
Y &=y+ux+rz\\
Z &=z
\end{align}
with $q,\quad r,\quad u\in \mathbb C[s]$. The Weierstrass model $\mathcal E_s$ is given by
$$Y^2Z+\lambda XYZ+\mu YZ^2=X^3+\alpha X^2Z+\beta XZ^2+\gamma Z^3$$
with coefficients satisfying
$$v(\lambda^2+4\alpha)=0,\quad v(\mu)\geq l,\quad v(\beta)\geq l, \quad v(\gamma)=m,\quad v(j(\mathcal E_s))=-m.$$
\end{theo}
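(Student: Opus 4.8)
The plan is to treat this as a normal-form statement and to prove it by a Hensel-type successive-approximation argument, exploiting the fact that the transformations $X=x+qz$, $Y=y+ux+rz$, $Z=z$ are exactly the admissible Weierstrass changes of variable with trivial scaling (the parameters $q,u,r$ being the translation-and-shear data of Tate's algorithm). Working over $\mathbb C[[s]]$ with the $s$-adic valuation $v$, I would first translate the double point of $E_0$ to the origin by a constant $(q,u,r)$. Since the origin then lies on $E_0$ with both partial derivatives vanishing there, this forces $v(a_3),v(a_4),v(a_6)\ge 1$. The hypothesis that $E_0$ has distinct tangents says precisely that the degree-two part $y^2+a_1xy-a_2x^2$ of the equation has nonzero discriminant modulo $s$, that is $v(b_2)=0$ where $b_2=a_1^2+4a_2=\lambda^2+4\alpha$.

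Next I would record how the relevant quantities move under $(q,u,r)$. A direct computation gives $\mu=a_3+qa_1+2r$, $\beta=a_4-ua_3+2qa_2-(r+qu)a_1+3q^2-2ur$, and $b_2\mapsto b_2+12q$; in particular any correction with $v(q)\ge 1$ preserves $v(b_2)=0$. The core of the argument is then an induction on the order: assuming $v(a_3),v(a_4)\ge n$ for some $1\le n<l$, I would find a correction $(q,u,r)$ of valuation $n$ raising both valuations to $n+1$. Solving $\mu\equiv 0\ (\mathrm{mod}\ s^{n+1})$ determines $r$ with no obstruction (the coefficient of $r$ is $2$), and substituting back, the leading term of $\beta$ becomes $(a_4+\tfrac12\bar a_1a_3)+\tfrac12\bar b_2\,q$, which can be cleared by choosing $q$ precisely because $\bar b_2\ne 0$, with $u$ left free. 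This is the step where the distinct-tangent hypothesis is indispensable, and it is the main obstacle: everything hinges on the invertibility of $\bar b_2$ making the order-by-order linear system solvable. Iterating up to order $l$ and truncating the resulting power series yields polynomials $q,u,r\in\mathbb C[s]$ with $v(\mu)\ge l$ and $v(\beta)\ge l$.

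Finally I would read off the discriminant of the resulting model $\mathcal E_s$. With $v(\mu),v(\beta)\ge l$ and $v(b_2)=0$ one computes $b_4=2\beta+\lambda\mu$, $b_6=\mu^2+4\gamma$, and $b_8=b_2\gamma-\lambda\mu\beta+\alpha\mu^2-\beta^2$, so that $b_4$ has valuation $\ge l$ while the correction terms in $b_8$ have valuation $\ge 2l$. Substituting into $\Delta=-b_2^2b_8-8b_4^3-27b_6^2+9b_2b_4b_6$ gives $\Delta=-b_2^3\gamma+(\text{terms of valuation}\ge\min(2l,\,l+v(\gamma)))$. Here the hypothesis $l>m/2$ enters decisively, since it makes $2l>m$, so the $\gamma$-term cannot be swamped. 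As $v(j)=-m$ together with $v(c_4)=v(b_2^2-24b_4)=0$ forces $v(\Delta)=m<2l$, comparing valuations yields $v(\gamma)=v(\Delta)=m$, and then $v(j(\mathcal E_s))=3v(c_4)-v(\Delta)=-m$, which completes the proof.
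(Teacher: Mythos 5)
Be aware first of what you are being compared against: the paper does not prove this statement at all. It is recalled as N\'eron's theorem, with the reader sent to \cite{N} for the argument, so the only meaningful comparison is with the classical proof itself. Your proposal is essentially a correct reconstruction of that classical successive-approximation argument (the one underlying N\'eron's reduction and Tate's algorithm): centering the node gives $v(\mu),v(\beta),v(\gamma)\ge 1$; distinct tangents give $v(b_2)=0$ for $b_2=\lambda^2+4\alpha$; the order-by-order system is solvable because the coefficient of $r$ is the unit $2$ and, after eliminating $r$, the coefficient of $q$ is the unit $\bar b_2/2$ (your observation that the shear $u$ cancels identically is correct); and the conclusions on $\gamma$ and $j$ are read off from the discriminant, using that $\Delta$ is invariant under these scaling-free substitutions and that $2l>m$. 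Two details need patching, both minor. First, your remainder estimate for $\Delta$ omits a term: $-27b_6^2=-27(\mu^2+4\gamma)^2$ contributes $-432\gamma^2$, of valuation $2v(\gamma)$, which need not be $\ge\min(2l,\,l+v(\gamma))$ (take $v(\gamma)=1$, $l=3$). The fix is to note that all your translations preserve $v(\gamma)\ge 1$ (every correction to $\gamma$ is a product of two terms of valuation $\ge 1$), so $2v(\gamma)>v(\gamma)$ and the leading term $-b_2^3\gamma$ still dominates. Second, the final comparison of valuations should be made in two explicit cases: if $\gamma\ne 0$ and $v(\gamma)<2l$, then $v(\Delta)=v(\gamma)$, forcing $v(\gamma)=m$; if instead $\gamma=0$ or $v(\gamma)\ge 2l$, then every term of $\Delta$ has valuation $\ge 2l>m$, contradicting $v(\Delta)=m$. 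This case split also justifies truncating $q,u,r$ to polynomials at the end, since the conclusions about $\gamma$ and $j$ depend only on $v(b_2)=0$, $v(\mu),v(\beta)\ge l$ and the invariance of $\Delta$, not on how the model was produced. With these one-line repairs your proof is complete and self-contained, which is more than the paper itself supplies.
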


\bigskip

A singular fiber of type $I_m$ is composed of non singular rational curves $\Theta_0$, $\Theta_1$, ..., $\Theta_{m-1}$ in such a way that $(\Theta_s.\Theta_t)\leq 1$ (i.e. $\Theta_s$ and $\Theta_t$ have at most one simple intersection point) if $s<t$ and $\Theta_r \cap \Theta_s \cap \Theta_t$ is empty for $r<s<t$. So the divisor of the singular fiber can be written
$$\Theta_0+\Theta_1+...+\Theta_{m-1}$$
with
$$(\Theta_0 . \Theta_1)=(\Theta_1 . \Theta_2)=... =(\Theta_s . \Theta_{s+1})=...(\Theta_{m-2} . \Theta_{m-1})=(\Theta_{m-1} . \Theta_0)=1.$$
\subsection{The N\'eron's model}
In the N\'eron's model, the configuration of the fiber $I_m$ of $\mathcal E_s$ over $s=0$ is performed in the space $\mathbb P_2 \times \mathbb P_2 \times ... \times \mathbb P_2$  ($h$ times) for $h=[m/2]$ if $m=2h$.

A point $(X:Y:Z)$ of $\mathcal E_s$ corresponds in the N\'eron's model to the point $(X:Y:Z^{(1)})\times (X:Y:Z^{(2)})\times ...\times (X:Y:Z^{(h)})$ where $(X:Y:Z^{(i+1)})=(X:Y:sZ^{(i)})$ for $ 0\leq i \leq h-1$.

So, if $(X:Y:Z)$ satisfies
$$Y^2Z+\lambda XYZ+\mu YZ^2=X^3+\alpha X^2Z+\beta XZ^2+\gamma Z^3,$$

since $Z^{(1)}=sZ$, the point $(X:Y:Z^{(1)})$ satisfies
$$Y^2Z^{(1)}+\lambda XYZ^{(1)}+\frac {\mu}{s} Y(Z^{(1)})^2=sX^3+\alpha X^2Z^{(1)}+\frac {\beta}{s} X(Z^{(1)})^2+\frac {\gamma}{s^2} (Z^{(1)})^3.$$
Because of valuations, if $h\geq 1$, $\mu /s$, $\beta /s$, $\gamma /s^2$ still belong to $\mathbb C[s]$.

Now if $h\geq 2$, in the singular fiber $s=0$, we get 

$$
Y^2Z^{(1)}+\lambda_0 XYZ^{(1)}-\alpha _0 X^2Z^{(1)}=Z^{(1)}(Y^2+\lambda_0 XY -\alpha _0 X^2)=0,
$$
that is

$$Z^{(1)}(Y-\nu_0 X)(Y-\bar{\nu_0} X)=0$$
since $\lambda_0+4\alpha_0 \neq 0$.

Thus, a point $(X:Y:Z)$ of $\mathcal E_s$, singular over $s=0$, i.e. giving the point $(0:0:1)$ for $s=0$, corresponds to the point $(X:Y:Z^{(1)})=(X/s:Y/s:Z)$ in the first component of the desingularized fiber. For $s=0$ this last point is either still $(0:0:1)$ or $(x_1:\nu_0x_1:1)$ or $(x_1:\bar{\nu_0}x_1:1)$.  In the two last cases, it has been desingularized either on the $\Theta_{0,1}$ or $\Theta_{0,m-1}$.

In the first case the desingularization must be pursued.

At the last step, if $m=2h$, and if the point $(X:Y:Z)$ singular over $s=0$ is not desingularized at the $h-1$ step, then $(X/s^{h-1}:Y/s^{h-1}:Z)$ gives $(0:0:1)$ for $s=0$; but $(X/s^h:Y/s^h:Z)$ gives, for $s=0$, $(x_1:y_1:z_1)$ with $x_1$, $y_1$ and $z_1$ satisfying the equation of a conic $\tilde{C^0}$
$$Y^2Z+\lambda_0 XYZ=\alpha_0 X^2Z+\gamma_m^0 Z^3.$$
If $m=2h+1$, the last step is like the first one.

Finally, keeping track of all these data and denoting like N\'eron, $c^0=(0:0:1)$, $a_1^0=(1:\nu^0:0)$ and $\bar{a_1^0}=(1:\bar{\nu^0}:0)$, $w^0$, $v^0$, $\bar{v^0}$ and $\tilde{v^0}$ respectively the lines $Z=0$, $Y=\nu^0 X$, $Y=\bar{\nu^0} X$ and the conic $\tilde{C^0}$, we can describe the components $\Theta_{0,i}$.

$$
\begin{aligned}
\Theta_{0,0} &=w^0 \times w^0 \times w^0 \times .... \times w^0 \\
\Theta_{0,1} &=v^0 \times a_1^0 \times a_1^0 \times .... \times a_1^0 \\
\Theta_{0,m-1} &=v^0 \times \bar{a_1^0} \times \bar{a_1^0} \times .... \times \bar{a_1^0} \\
\ldots  & \ldots \\
\Theta_{0,i} &=c^0 \times c^0\times \cdots \times c^0\times v^0 \times a_1^0 \times \cdots \times a_1^0\times a_1^0 \\
\Theta_{0,m-i} &=c^0 \times c^0\times \cdots \times c^0\times \bar{v^0} \times \bar{a_1^0} \times \cdots \times \bar{a_1^0}\times \bar{a_1^0} \\
\ldots  & \ldots \\
\Theta_{0,h-1} &=c^0\times c^0\times \cdots  \cdots \times c^0 \times v^0\times a_1^0 \\
\Theta_{0,m-h+1} &=c^0\times c^0\times \cdots  \cdots \times c^0 \times\bar{v^0}\times \bar {a_1^0} \\
\Theta_{0,h} &=c^0 \times c^0 \times \cdots \cdots \times c^0 \times \tilde{v^0} \qquad (m=2h)\\
\Theta_{0,h} &=c^0 \times c^0 \times \cdots \cdots \times c^0 \times v^0 \qquad (m=2h+1)\\
\Theta_{0,h+1} &=c^0 \times c^0 \times \cdots \cdots \times c^0 \times \bar{v^0} \qquad (m=2h+1)
\end{aligned}
$$

So the N\'eron's model allows us to identify each rational component $\Theta_{0,i}$ of a singular fiber, hence to know which component is cut by a section through the surface or by the exceptional divisor of the desingularization of a double point.

\bigskip

\subsection{Some known facts concerning the $L$-series of a $K3$-surface}

For definitions concerning $K3$-surfaces, see for example \cite {Ber2}.

Let $V$ be a smooth projective variety of dimension $d$ over a finite field $k$ with $q$ elements. Suppose moreover that $V$ is geometrically irreducible (i.e. irreducible over some algebraic closure of $k$). Let $N_n$ denote the number of points of $V$ in a $k$- extension of degree $n$. Then, 
the zeta function attached to $V$ is defined by
$$Z_V(T):=\exp(\sum_{n\geq 1}N_n\frac {T^n}{n}).$$
By Weil's conjectures (1949), proved by Dwork (1960) and Deligne [see for example \cite{Hu}], we know that $Z_V(T)$ is a rational function with coefficients in $\mathbb Q$ satisfying the functional equation
$$Z_V(\frac {1}{q^d T})=\pm (q^{d/2} T)^c Z_V(T) $$
for some $c \in \mathbb N$.

If $V$ is an algebraic $K3$-surface  defined over $\mathbb Q$, then, for almost all primes $p$, its reduction modulo $p$ is again a $K3$-surface that we denote $V_p$. Moreover, one can show that $Z_{V_p}(T)$ is of the form
$$Z_{V_p}(T)=\frac {1}{(1-T)(1-p^2 T)P_2(T)}$$
where $P_2(T)$ is a polynomial of degree $22$ satisfying $P_2(0)=1$ and such that
$$P_2(T)=Q_p(T)R_p(T)$$
where $R_p(T)$ comes from the algebraic cycles with $R_p(\frac {T}{p})\in \mathbb Z[T]$ and $Q_p(T)$ comes from the transcendental 
cycles. Under some conditions, the  $H^{1,1}$ part of the Hodge decomposition of $H^2$ intersected with $H^2(V,\mathbb Z)$ has dimension $20$. In that case, the  $K3$-surface is singular, that is to say of Picard number $\rho=20$ and its transcendental lattice is of rank $2$. Hence $Q_p(T)$ is of degree $2$ for all primes
 $p$ not dividing an integer $N$. The Langlands's philosophy says that one can define the $Q_p(T)$ for $p$ dividing $N$ in such a manner that
$$Z(V,s):=\prod _p \frac {1}{Q_p(p^{-s})}=\sum_{n\geq 1}\frac {a_n}{n^s}$$
be the Dirichlet series of a cusp form.

For example \cite {SB}, if the affine equation of the surface is given by
$$t^2(x+y)(x+z)(y+z)+xyz=0,$$ 
the weight of the cusp form is $3$ and the level determined by the determinant of the transcendental lattice. In that case, the cusp form is
$$f(z)=\eta (z)^2\eta (2z)\eta (4z)\eta (8z)^2 \in S_3(\Gamma_0(8),\epsilon_8)$$
where $\epsilon_8$ is the character associated to $\mathbb Q(\sqrt{-2})$.
\bigskip

\subsection{Two results of Shioda}
\bigskip
We refer the reader to Shioda's article \cite{S1} but also to \cite{S}, \cite{SI}.

1) Let $\Phi : X \rightarrow \mathbb P^1$ be an elliptic surface with a section and consider the sections of this elliptic fibration  i.e. determined by the rational points of the corresponding elliptic curve defined over the field of rational functions in $s$.

Denote by $r(\Phi)$ the rank of the group of sections. Then the Picard number $\rho (X)$ satisfies the equation
\begin{equation}
\rho(X) = r( \Phi)+2+\sum_{\nu =1}^h (m_{\nu}-1)
\end{equation}
where $h$ is the number of singular fibers and $m_{\nu}$ the number of irreducible components of the corresponding singular fiber.

\bigskip

2) Let $(S,\Phi, \mathbb P^1)$ be an elliptic surface with a section $\Phi$, without exceptional curves of first kind.

Denote by $NS(S)$ the group of algebraic equivalence classes of divisors of $S$.

Let $u$ be the generic point of $\mathbb P^1$ and $\Phi^{-1}(u)=E$ the elliptic curve defined over $K=\mathbb C(u)$ with a $K$-rational point $o=o(u)$. Then, $E(K)$ is an abelian group of finite type provided that $j(E)$ is transcendental over $\mathbb C$.

Let $r$ be the rank of $E(K)$ and $s_1,..., s_r$ be generators of $E(K)$ modulo torsion. Besides, the torsion group  $E(K)_{tors}$ is generated by at most two elements $t_1$ of order $e_1$ and $t_2$ of order $e_2$ such that $1\leq e_2$, $e_2 |e_1$ and $\mid E(K) _{tors}\mid =e_1e_2$.

The group $E(K)$ of $K$-rational points of  $E$ is canonically identified with the group of sections of $S$ over $\mathbb P^1(\mathbb C)$.

For $s\in E(K)$, we denote by $(s)$ the curve image in $S$ of the section corresponding to $ s$.

Let us define
$$D_{\alpha}:=(s_{\alpha})-(o) \,\,\,\,\,\,1\leq\alpha \leq r$$
$$D'_{\beta}:=(t_{\beta})-(o) \,\,\,\,\,\,\beta =1,2.
$$
Consider now the singular fibers of $S$ over
$\mathbb P^1$. We set
$$\Sigma:=\{v\in \mathbb P^1  / C_v=\Phi^{-1}(v)\,\,\,\, {\hbox {be a singular fiber}} \}$$
and for each $v\in \Sigma$, $\Theta_{v,i}$, $0\leq i \leq m_v-1$, the
$m_v$ irreducible components of $C_v$.

Let $\Theta_{v,0}$ be the unique component of $C_v$ passing through $o(v)$.

One gets
$$C_v=\Theta_{v,0} +\sum_{i\geq 1}\mu_{v,i}\Theta_{v,i},\,\,\,\,\,\,\,\mu_{v,i}\geq 1.$$
Let $A_v$ be the matrix of order $m_v-1$ whose entry of index
$(i,j)$ is $(\Theta_{v,i}\Theta_{v,j})$, $i,j\geq 1$, where $(DD')$
is the intersection number of the divisors $D$ et $D'$ along
$S$. Finally $f$ will denote a non singular fiber,
i.e. $f=C_{u_0}$ for $u_0\notin \Sigma$.

\begin{theo}
The N\'eron-Severi group $NS(S)$ of the elliptic surface $S$
is generated by the following divisors
$$f, \Theta_{v,i} \,\,\,\,\,\,(1\leq i \leq
m_v-1,\,\,\,\,v\in\Sigma)$$
$$(o), D_{\alpha}\,\,\,\,\,\,1\leq \alpha \leq r,
\,\,\,\,D'_{\beta}\,\,\,\,\beta =1,2.$$

The only relations between these divisors are at most two relations 
$$e_{\beta}D'_{\beta}\approx e_{\beta}(D'_{\beta} (o))f+\sum_{v\in
\Sigma
}(\Theta_{v,1},...,\Theta_{v,m_v-1})e_{\beta}A_v^{-1}\left ( \begin{array}{l}
(D'_{\beta}\Theta_{v,1})\\.\\
.\\
.\\
(D'_{\beta}\Theta_{v,m_v-1}
)
\end{array}
\right )
$$

where $\approx$ stands for the algebraic equivalence.

\end{theo}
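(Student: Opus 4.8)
The plan is to identify $NS(S)$ via the \emph{trivial lattice} and the Mordell--Weil group in the style of Shioda, and then to read off the relations by orthogonal projection for the intersection pairing.

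First I would introduce the subgroup
$$T:=\langle\, f,\ (o),\ \Theta_{v,i}\ (1\le i\le m_v-1,\ v\in\Sigma)\,\rangle\subset NS(S)$$
generated by a general fiber, the zero section and the non-identity components $\Theta_{v,i}$ ($i\ge1$) of the singular fibers. The intersection numbers $f^2=0$, $f\cdot(o)=1$, $f\cdot\Theta_{v,i}=0$ and $(o)\cdot\Theta_{v,i}=0$ for $i\ge1$ show that $\langle f,(o)\rangle$ has Gram matrix $\left(\begin{smallmatrix}0&1\\1&(o)^2\end{smallmatrix}\right)$ of determinant $-1$ and is orthogonal to the classes $\Theta_{v,i}$. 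For a fiber $C_v$ of type $I_{m_v}$ the components $\Theta_{v,1},\dots,\Theta_{v,m_v-1}$ span a negative-definite lattice with nonsingular Gram matrix $A_v$. Hence $T$ is nondegenerate of rank $2+\sum_{v\in\Sigma}(m_v-1)$, and the displayed generators form a $\mathbb Z$-basis of $T$. (A vertical divisor lies in $T$ because $\Theta_{v,0}=f-\sum_{i\ge1}\mu_{v,i}\Theta_{v,i}$, using $C_v\approx f$.)

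The heart of the argument is the isomorphism
$$E(K)\ \xrightarrow{\ \sim\ }\ NS(S)/T,\qquad s\longmapsto (s)\bmod T.$$
I would check its three defining properties. It is a homomorphism because the group law on $E(K)$ is induced by linear equivalence on the generic fiber $E/K$, so that $(s+s')-(s)-(s')+(o)$ restricts to the trivial divisor on $E/K$ and is therefore vertical, hence in $T$. It is injective because $(s)-(s')\in T$ forces the restrictions of $(s)$ and $(s')$ to $E/K$ to be linearly equivalent degree-one divisors, i.e. $s=s'$. It is surjective because any class in $NS(S)$ splits as a vertical part, which lies in $T$, plus a horizontal part whose restriction to $E/K$ is, by Abel--Jacobi, linearly equivalent to a single rational point, i.e. to a section modulo $T$. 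Together with the Shioda--Tate formula $\rho(S)=r+2+\sum_{v\in\Sigma}(m_v-1)$, which matches $\operatorname{rank}NS(S)=r+\operatorname{rank}T$, this isomorphism shows that $T$ together with lifts of the generators of $E(K)$ generates $NS(S)$. Taking the lifts $(o)$, $D_\alpha=(s_\alpha)-(o)$ and $D'_\beta=(t_\beta)-(o)$ produces exactly the asserted list of generators.

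For the relations, the free generators $s_1,\dots,s_r$ map to a basis of $E(K)/\mathrm{tors}\cong\bigl(NS(S)/T\bigr)/\mathrm{tors}$ and so impose no relation; every relation comes from $E(K)_{\mathrm{tors}}$, that is, from $e_\beta t_\beta=0$, which under the isomorphism reads $e_\beta D'_\beta\in T$. To make it explicit I would write $e_\beta D'_\beta\approx a f+b\,(o)+\sum_{v,i}c_{v,i}\Theta_{v,i}$ and solve for the coefficients by pairing against the basis of $T$. Pairing with $f$ gives $b=e_\beta D'_\beta\cdot f=e_\beta(1-1)=0$; pairing with $(o)$ then gives $a=e_\beta\bigl(D'_\beta\cdot(o)\bigr)$; and pairing with each $\Theta_{v,j}$ gives the linear system $A_v\,(c_{v,i})_i=e_\beta\bigl(D'_\beta\cdot\Theta_{v,j}\bigr)_j$, so that
$$(c_{v,i})_i=e_\beta A_v^{-1}\begin{pmatrix}(D'_\beta\cdot\Theta_{v,1})\\ \vdots\\ (D'_\beta\cdot\Theta_{v,m_v-1})\end{pmatrix}.$$
This reproduces the displayed formula, and since $E(K)_{\mathrm{tors}}$ is generated by at most the two elements $t_1,t_2$, there are at most two such relations.

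The step I expect to be the main obstacle is establishing the isomorphism $E(K)\cong NS(S)/T$, and above all its surjectivity: one must rule out divisor classes that escape the span of the trivial lattice and the sections, and show that the additive structure of the sections really matches addition in $NS(S)/T$. This rests on the Abel--Jacobi description of the generic fiber and on upgrading the a priori finite-index inclusion of the subgroup generated by the listed divisors to an equality, using the exact rank count from Shioda--Tate together with the nondegeneracy of the intersection form provided by the Hodge index theorem.
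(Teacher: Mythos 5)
Your proposal is correct, but there is nothing in the paper to compare it against: the paper states this result verbatim as one of ``two results of Shioda'' and gives no proof at all, simply referring the reader to \cite{S1}, \cite{S}, \cite{SI}. What you have written is in substance Shioda's own argument (the Shioda--Tate theorem): introduce the trivial lattice $T$ spanned by $f$, $(o)$ and the non-identity fiber components; establish $E(K)\cong NS(S)/T$ by restriction to the generic fibre and Abel--Jacobi; conclude that the listed divisors generate; and pin down the relations by observing that a relation projects to a relation in $E(K)$, so it must come from $e_\beta t_\beta=0$, i.e. $e_\beta D'_\beta\in T$, whose coordinates in the basis of $T$ are then computed by intersecting with $f$, $(o)$ and the $\Theta_{v,j}$ --- exactly reproducing the coefficients $e_\beta(D'_\beta(o))$ and $e_\beta A_v^{-1}(\cdots)$ in the displayed formula. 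Your derivation of those coefficients is correct, including the order of operations (pairing with $f$ first to kill the $(o)$-coefficient).

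Two points deserve more care than you give them. First, your arguments repeatedly pass between classes in $NS(S)$ (algebraic equivalence) and linear equivalence on the generic fibre; this is legitimate here because the base is $\mathbb P^1$ and $j(E)$ is non-constant, so $q(S)=0$ and $\operatorname{Pic}^0(S)=0$, making algebraic and linear equivalence coincide --- this is precisely where the hypothesis ``$j(E)$ transcendental over $\mathbb C$'' in the statement enters, and it should be said. Second, your appeal to the Shioda--Tate rank formula $\rho(S)=r+2+\sum_v(m_v-1)$ is redundant and potentially circular: in Shioda's development that formula (equation (4) of the paper) is a \emph{corollary} of this very theorem, and your isomorphism $E(K)\cong NS(S)/T$, proved via Abel--Jacobi, already yields generation on its own. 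Neither point is a genuine gap; the proof stands once these are tidied up.
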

\bigskip

\section{Proof of theorem 1}
\subsection{Determinant of the transcendental lattice}
The polynomial $P_{10}$ defines a non-smooth surface $S_{10}$ which gives after desingularization the K3-surface $Y_{10}$.

\begin{theo}
The determinant of the transcendental lattice of $Y_{10}$ is equal to $72$.
\end{theo}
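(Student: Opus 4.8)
The plan is to realize $Y_{10}$ as an elliptic surface $\Phi\colon Y_{10}\to\mathbb P^1$ with a section and to reduce the computation of $\det T_{Y_{10}}$ to that of the determinant of the N\'eron--Severi lattice. Since $Y_{10}$ has Picard number $20$, the transcendental lattice $T_{Y_{10}}$ is the rank-$2$ orthogonal complement of $NS(Y_{10})$ inside the unimodular lattice $H^2(Y_{10},\mathbb Z)$; for a primitive sublattice of a unimodular lattice the discriminant groups of the sublattice and of its orthogonal complement agree up to sign, so $|\det T_{Y_{10}}|=|\det NS(Y_{10})|$. Thus it suffices to pin down $NS(Y_{10})$ completely and to compute its Gram determinant, and the target value $72$ is then obtained as $|\det NS(Y_{10})|$.

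First I would choose a coordinate on the base turning $P_{10}=0$ into a Weierstrass fibration and list its singular fibers. Using N\'eron's theorem together with the explicit description of the components $\Theta_{0,i}$ recalled above, I would determine the Kodaira type of each singular fiber (in particular the integers $m_\nu$ of the $I_{m_\nu}$ fibers) and, crucially, record which component $\Theta_{v,i}$ is met by the zero section, by each exceptional divisor of the resolution of the double points, and by any further section. Shioda's first theorem then gives $\rho(Y_{10})=r(\Phi)+2+\sum_\nu(m_\nu-1)$; imposing $\rho=20$ fixes the Mordell--Weil rank $r$, and exhibiting the infinite section found by Lecacheux over $\mathbb Q(\sqrt{-3})$ shows $r\ge 1$ and should realize the full rank together with the torsion subgroup.

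With the fiber data and the sections in hand, I would write down the generators of $NS(Y_{10})$ supplied by Shioda's second theorem --- the fiber class $f$, the non-identity components $\Theta_{v,i}$, the zero section $(o)$, the free generators $D_\alpha$ and the torsion generators $D'_\beta$ --- and assemble their intersection matrix from $f^2=0$, $(o)^2=-2$, $(o)\cdot f=1$, the $A_{m_\nu-1}$-type matrices $A_v$ for the fibers, and the component incidences determined in the previous step. After using the torsion relations of Shioda's theorem to pass to an honest basis, the determinant of this Gram matrix gives $|\det NS(Y_{10})|$. Concretely this amounts to the discriminant identity $|\det NS(Y_{10})|=\bigl(\prod_\nu m_\nu\bigr)\cdot\mathrm{Reg}(\Phi)\big/|E(K)_{\mathrm{tors}}|^2$, where $\mathrm{Reg}(\Phi)$ is the regulator of the height pairing on the sections modulo torsion, and I would verify that these three ingredients combine to $72$.

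I expect the main obstacle to be exactly the step the paper flags: locating the infinite section and computing its height. Determining $\mathrm{Reg}(\Phi)$ requires knowing precisely which component $\Theta_{v,i}$ the Lecacheux section meets in each singular fiber, and this is where the iterated $\mathbb P_2\times\cdots\times\mathbb P_2$ N\'eron model is indispensable, since a naive resolution does not keep track of the local contributions $\tfrac{i(m-i)}{m}$ to the height. A secondary subtlety is confirming that the divisors listed above really generate $NS(Y_{10})$ and not merely a finite-index sublattice --- equivalently that $r$ and the fiber contributions exhaust all of $\rho=20$ --- which also uses the fact, guaranteed by the two-dimensionality of the transcendental piece of $H^2$, that $Y_{10}$ is genuinely singular.
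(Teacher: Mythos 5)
Your overall route is the paper's route: cut $Y_{10}$ by $t=s(x+y+z)$ to get an elliptic fibration with fibers $I_{12},I_2,I_2,I_3,I_3,I_1,I_1$, use Shioda's rank formula with $\rho=20$ to get Mordell--Weil rank $1$, locate all component incidences (torsion sections, exceptional divisors, Lecacheux section) via N\'eron's model, and compute $|\det NS(Y_{10})|=|\det T_{Y_{10}}|$; whether one then evaluates the $20\times 20$ Gram determinant and divides by the square of the torsion index (the paper: $6^2\cdot 72/6^2$) or invokes the Shioda--Tate discriminant identity $\bigl(\prod_\nu m_\nu\bigr)\cdot\mathrm{Reg}/|E(K)_{\mathrm{tors}}|^2=432\cdot 6/36$ (your plan) is immaterial.

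The genuine gap is the step you dismiss as a ``secondary subtlety'': showing that the zero section, the fiber components, the torsion sections and the single Lecacheux section generate \emph{all} of $NS(Y_{10})$, i.e.\ that the lattice $\Lambda$ they span is saturated, equivalently that the Lecacheux section together with the $6$-torsion generates the full Mordell--Weil group and not a finite-index subgroup. If the index were $n>1$, every determinant you compute drops by $n^2$, so your argument as it stands only yields $|\det T_{Y_{10}}|=72/n^2$ for some integer $n\geq 1$. You offer no mechanism to force $n=1$, and your suggestion that this follows from $Y_{10}$ ``being genuinely singular'' is off the mark: singularity gives $\rho=20$, which you have already spent on fixing the rank. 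The paper closes this hole with two separate inputs. First, the Peters--Stienstra description of the transcendental lattice of the family (Gram matrix $\bigl(\begin{smallmatrix}0&0&1\\0&12&0\\1&0&0\end{smallmatrix}\bigr)$ specialised at $2\tau^2+1=0$) forces $\det T_{Y_{10}}\in\{72,8\}$, which restricts the index to $1$ or $3$ --- ruling out $2$ and $6$, something the height/regulator machinery alone cannot do. Second, index $3$ is excluded by an explicit geometric argument: a $3$-divisible class would have the form $(\Sigma)+k(s_6)$ modulo the trivial lattice; restriction to the fibers over $0$ and $\infty$ kills the cases $k\geq 1$, and writing $\Sigma=3Q$ and enumerating the components each fiber would allow $Q$ to meet shows the resulting Gram determinants are never $-8$. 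Some substitute for this two-step argument (or an honest determination of the full Mordell--Weil group) must be added before your proposal proves the equality $|\det T_{Y_{10}}|=72$ rather than merely the divisibility $n^2\,|\det T_{Y_{10}}|=72$.
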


\begin{proof}

To compute the determinant of the transcendental lattice of $Y_{10}$  we choose an elliptic fibration to build a N\'eron model and apply Shioda's results.
The proof however involves several lemmas useful later on.

The $K3$-surface $Y_{10}$ is a double cover of the Beauville's rational elliptic surface defined by
$$(x_1+y_1)(x_1+z_1)(y_1+z_1)+ux_1y_1z_1=0$$

Let us recall its singular fibers:

$$\begin{array}{clcc}
{\hbox {at}} & u=\infty  & {\hbox {of type}} & I_6\\
{\hbox {at}} & u=0  & {\hbox {of type}} & I_3\\
{\hbox {at}} &  u=1  & {\hbox {of type}} &  I_2\\
{\hbox {at}} &  u=-8   & {\hbox {of type}} &  I_1.
\end{array}$$

Now cutting $Y_{10}$ by the hyperplane $t=s(x+y+z)$, you get, after simplification, the corresponding elliptic fibration
$$s^2(x+y)(x+z)(y+z)+(s^2-10s+1)xyz=0.$$

Thus, putting $u=(s^2-10s+1)/s^2$, we deduce from above, the singular fiber structure of this elliptic fibration of the $K3$-hypersurface $Y_{10}$:

$$\begin{array}{clcc}
{\hbox {at}} &  s=0  & {\hbox {of type}} & I_{12}\\
{\hbox {at}} &  s=\infty  & {\hbox {of type}} & I_2\\

{\hbox {at}} & s=1/10  & {\hbox {of type}} & I_2\\
{\hbox {at}} &  s=\alpha\qquad (\alpha^2-10\alpha +1=0) & {\hbox {of type}} &  I_3\\
{\hbox {at}} &  s=\beta\qquad (\beta^2-10\beta+1=0) & {\hbox {of type}} &  I_3\\
{\hbox {at}} &  s=1 \qquad  & {\hbox {of type}} &  I_1\\
{\hbox {at}} &  s=1/9 \qquad    & {\hbox {of type}} &  I_1.
\end{array}
$$
From (4) since $\rho=20$ (\cite{PS}), it follows $r=1$ and in order to describe the N\'eron -Severi group, we need an infinite section on the surface.

The singular surface $S_{10}$ contains the $7$ double points

$$P_{01}=(1:0:0:0) \quad P_{02}=(0:1:0:0) \quad P_{03}=(0:0:1:0) \quad P_{04}=(0:0:0:1)$$
$$P_{12}=(1:-1:0:0) \qquad P_{13}=(1:0:-1:0) \qquad P_{23}=(0:1:-1:0).$$
The double points $P_{12}$, $P_{13}$, $P_{23}$ lie in all the singular fibers.
The double points $P_{01}$, $P_{02}$, $P_{03}$ lie only in the singular fiber above $0$.

The  $4$ lines $P_{01}P_{03}P_{13}$, $P_{02}P_{03}P_{23}$, $P_{01}P_{02}P_{12}$, $P_{12}P_{13}P_{23}$ of respective equations
$$y=0 \quad t=0 \qquad x=0 \quad t=0 \qquad z=0 \quad t=0 \qquad x+y+z=0 \quad t=0$$
passing each through three double points and the lines $P_{03}P_{04}$, $P_{01}P_{04}$, $P_{02}P_{04}$ of respective equations
$$x=0 \quad y=0 \qquad y=0 \quad z=0 \qquad x=0 \quad z=0$$
passing each through two double points, lie on the surface.

We shall complete the generators of $NS(Y_{10})$ with the infinite section $\Sigma$.
\subsection{The Weierstrass model $E_s$ and the infinite section $\Sigma$}
Now we need a suitable Weierstrass model to apply N\'eron's desingularization.

Starting with the cubic projective equation $C_s$
$$s^2(x+y)(x+z)(y+z)+(s^2-10s+1)xyz=0,$$
we use a projective transformation sending the flex point $(1:-1:0)$ with flex tangent
$$s^2(x+y)+(s^2-10s+1)z=0$$
to the flex point $(0:1:0)$ with flex tangent $Z'=0$. 

This birational transformation
$$\begin{array}{lll}
X' & = & s^4(10s-1)(x+y)\\
Y' & = & -s^4(10s-1)(s^2-10s+1)y\\
Z' & = &s^2(x+y)+(s^2-10s+1)z
\end{array}
$$
gives the Weierstrass model

$$Y'^2Z'+(s^2-10s+1)X'Y'Z'=X'^3+X'^2Z's^2(1-10s-s^2)+X'Z'^2(10s^7-s^6).$$

In this model, we find with Pari the torsion points 
$$\begin{array}{c}
s_6=(s^2(10s-1):0:1)\\
5s_6=(-s^2+10s^3:-s^2(10s-1)(s^2-10s+1):1)\\
2s_6=(s^4:0:1)\\
4s_6=(s^4:-s^4(s^2-10s+1):)\\
3s_6=(0:0:1)\\
(0).
\end{array}
$$

The infinite section $\Sigma$ was found by Lecacheux \cite{Le} and is defined by the point

$$X'=-\frac {1}{432} s(2s-1)^2(8s-1)^2(5s-1)^2$$
$$ Y'=-\frac{1}{106251264}(1-\sqrt{-3}+s(19\sqrt{-3}-3)-s^2(78+82\sqrt{-3})+160s^3)$$
$$(13+5\sqrt{-3}-s(145+37\sqrt{-3})+244s^2)(8s-1)(28s-5-\sqrt{-3})(41\sqrt{-3}-9)(2s-1)(5s-1)$$
$$Z'=s^3.$$

Thus $\Sigma$ is defined over $\mathbb Q(\sqrt{-3})$.

\subsection{The N\'eron's model over $s=0$}
Using a transformation as in theorem 3, we obtain the N\'eron-Weierstrass model $\mathcal E_s$ over $s=0$
$$\begin{array}{l}
Y^2Z+XYZ(s^2-12s+1)+YZ^2(2s^8-24s^7)  =\\
X^3+X^2Z(s-10s^2-9s^3-s^4+6s^6)+XZ^2(2s^7-39s^8-36s^9-4s^{10}+12s^{12})\\
+Z^3(-s^{12}-38s^{14}-36s^{15}-4s^{16}+8s^{18})
\end{array}
$$

with
$$
\begin{array}{l}
X=(-s^4+10s^5-2s^8)(x+y)-2s^6(s^2-10s+1)z \\
Y=(-s^5+10s^6+s^8)x+(s^4-21s^5+111s^6-10s^7+s^8)y+(s^6-10s^7+s^8)z\\
Z=s^2(x+y)+(s^2-10s+1)z.
\end{array}
$$

In this model we have $\nu_0=0$, $\bar{\nu_0}=-1$ and the equation of the conic is
$$Y^2+XY+Z^2=0.$$
Thus all the rational components of the N\'eron's model over $0$ are defined over $\mathbb Q$ and the point $(-2:1:1)$ is on the conic.
\subsubsection{The infinite section}
It corresponds to the point on $\mathcal E_s$
$$\begin{array}{l}
X=-\frac {1}{432} (1-30s+357s^2-2140s^3+6756s^4-10560s^5+6400s^6+864s^8)s\\
Y=-\frac {1}{15552}((130752-524800\sqrt{-3})s^9+(1013760\sqrt{-3}-1572480)s^8\\
+(-611640\sqrt{-3}+2517768)s^7+(58776\sqrt{-3}-1687896)s^6\\
+(590274+80550\sqrt{-3})s^5+(-116172-37872\sqrt{-3})s^4+(7665\sqrt{-3}+12924)s^3\\
+(-819\sqrt{-3}-756)s^2+(45\sqrt{-3}+18)s-\sqrt{-3})\\
Z=s^3
\end{array}
$$

Thus $\Sigma$ cuts $\Theta_{0,0}$ at the intersection point of $\Theta_{0,0}$ with the zero section $(0)$.

Now we proceed to describe which component cut the torsion points.

\subsubsection{Torsion sections}
The correspondance between the $E_s$-model $(X':Y':Z')$ and the $\mathcal E_s$ model $(X:Y:Z)$ is given by
$$X=X'-2s^6Z' \qquad \qquad Y=Y'+sX'+s^6Z' \qquad \qquad Z=Z'.$$

We find that the $6$-torsion point $s_6$ corresponds in the $\mathcal E_s$ model to the point
$$(-s^2+10s^3-2s^6:-s^3+10s^4+s^6:1).$$
So it is identified in the N\'eron model with the point
$$(0:0:1) \times (-1:0:0) \times (-1:0:0)\times (-1:0:0) \times (-1:0:0) \times (-1:0:0),$$
hence belongs to $\Theta_{0,2}$.

So, we obtain 
$$5s_6 \in \Theta_{0,10}$$
$$2s_6 \in \Theta_{0,4}$$
$$3s_6 \in \Theta_{0,6}$$
$$4s_6 \in \Theta_{0,8}$$
$$(0) \in \Theta_{0,0}.$$

\subsubsection{The exceptional divisors}

\begin{itemize}
\item The exceptional divisor $(P_{01})$ cuts $\Theta_{0,2}$ at the same point as the torsion section  $(s_6)$.
\item The exceptional divisor $(P_{13})$ cuts $\Theta_{0,4}$ at the same point as the torsion section $(2s_6)$.
\item The exceptional divisor $(P_{03})$ cuts $\Theta_{0,6}$ at the same point as the torsion section $(3s_6)$.
\item The exceptional divisor $(P_{23})$ cuts $\Theta_{0,8}$ at the same point as the torsion section $(4s_6)$.
\item The exceptional divisor $(P_{02})$ cuts $\Theta_{0,10}$ at the same point as the torsion section $(5s_6)$.
\item The exceptional divisor $(P_{12})$ cuts $\Theta_{0,0}$ at the same point as the torsion section $(0)$.
\end{itemize}
Thus the exceptional divisors are exactly the torsion sections.

\subsection{The N\'eron's model over $s=\infty$}
In that case, we put $s=1/\sigma $; so we get the projective cubic $C_{\sigma}$

$$(x+y)(x+z)(y+z)+(\sigma^2-10\sigma+1)xyz=0.$$
Now, using the change variables
$$
\begin{array}{lll}
  x & = &-(\sigma^2-10\sigma+1)X'+Y' \\
y & = & -Y'\\
z & = &X'+\sigma(\sigma -10)Z',
\end{array}
$$
$$
\begin{array}{lll}
  X' & = &-(x+y)(\sigma^2-10\sigma) \\
Y' & = & -y(\sigma^2-10\sigma)(\sigma^2-10\sigma+1)\\
Z' & = &x+y+(\sigma^2-10\sigma+1)z,
\end{array}
$$

we obtain the $E_{\sigma}$-model. 
$$Y'^2Z'-(\sigma^2-10\sigma+1)X'Y'Z'=X'^3+X'^2Z'(\sigma^2-10\sigma-1)+(10\sigma-\sigma^2)X'Z'^2.$$ 

And, by the change variables

\begin{align}\label{E:mm3}
X' & =  \frac {X}{9}+\frac {20}{3} \sigma Z                   & X & =  9X'-60\sigma Z' \notag\\
Y' & = - \frac {X}{9} -\frac {Y}{27} +\frac {10}{3} \sigma Z   & Y & =  -27X'-27Y'+270\sigma Z' \notag \\
Z' & =  Z                                                     & Z & =  Z'\notag
\end{align}
we get the N\'eron's model ${\mathcal E}_{\sigma}$
\begin{align}
Y^2Z+XYZ(9-30\sigma +3\sigma^2)+ & YZ^2180 \sigma^2(\sigma -10)\notag \\
 & = X^3-(27-180\sigma)X^2Z+XZ^2\sigma^2(810\sigma +2619)\notag \\
 &+Z^3(24300\sigma^2-274860\sigma^3+48600\sigma^4).\notag
\end{align}
We have the relations
\begin{align}
X & = -60\sigma(\sigma^2-10\sigma+1)z+(-9(\sigma^2-10\sigma)-60\sigma)(x+y)\notag\\
Y &=27(x+y)\sigma^2+27y(\sigma^2-10\sigma)(\sigma^2-10\sigma+1)+270z\sigma(\sigma^2-10\sigma+1)\notag\\
Z &=x+y+z(\sigma^2-10\sigma+1).\notag
\end{align}

In this model we get $\nu_0=(-9+3\sqrt{-3})/2$, $ \bar{\nu_0}=(-9-3\sqrt{-3})/2 $ and the equation of the conic is
$$Y^2+9XY+27X^2-24300Z^2=0.$$

\subsubsection{The infinite section $\Sigma$}

The section $\Sigma$ cuts $E_{\sigma}$ at
\begin{align}
X' & = -\frac {1}{432} (\sigma -5)^2(\sigma -2)^2(\sigma -8)^2\notag\\
Y' &=\frac {1}{15552}\sqrt{-3}(\sigma^3+\sigma^2(-15+4\sqrt{-3})+\sigma(42-40\sqrt{-3})+40+4\sqrt{-3})\notag\\
 &(\sigma^2+\sigma(-10+\sqrt{-3})+13-5\sqrt{-3})(\sigma-5+\sqrt{-3})(\sigma-5)(\sigma-2)(\sigma-8)\notag\\
Z' &=1\notag
\end{align}
which gives in the N\'eron model the point
$$(-\frac {400}{27},-\frac {200}{27}+\frac {8200}{243}\sqrt{-3},0)\times(-\frac {400}{27},-\frac {200}{27}+\frac {8200}{243}\sqrt{-3},0).$$
Thus the infinite section $\Sigma$ cuts $\Theta_{\infty,0}$.

\subsubsection{Torsion sections}
In the $E_{\sigma}$-model $(X':Y':Z')$, there are two $6$-torsion sections $s_6=(10\sigma-\sigma^2:(10\sigma-\sigma^2)(\sigma^2-10\sigma+1):1)$, $5s_6=(10\sigma -\sigma^2:0:1)$, two $3$-torsion sections $4s_6=(1:0:1)$, $2s_6=(1:\sigma^2-10\sigma+
1:1)$, one $2$-torsion section $3s_6=(0:0:1)$ and the zero section $(0)=(0:1:0)$.
We find that the $6$-torsion point $s_6$ corresponds in the $\mathcal E_{\sigma}$ model to the point
$$(30\sigma-9\sigma^2:27\sigma(-10+102\sigma-20\sigma^2+\sigma^3):1).$$
So it is identified in the N\'eron model with the point
$$(30:-270:1) \times (30:-270:0)$$
hence belongs to $\Theta_{\infty,1}$, since the point $(30:-270:1)$ lies on the conic.

Similarly, we obtain 
$$5s_6=(30\sigma-9\sigma^2:27\sigma^2:1) \rightsquigarrow (30:0:1)\times(30:0:0)\in \Theta_{\infty,1}$$
$$4s_6 =(9-60\sigma:-54+540\sigma-27\sigma^2:1)\rightsquigarrow (9:-54:0)\times (9:-54:0) \in \Theta_{\infty,0}$$
$$2s_6=(9-60\sigma:-27+270\sigma:1)\rightsquigarrow (9:-27:0) \times (9:-27:0) \in \Theta_{\infty,0}$$
$$3s_6=(-60\sigma:270\sigma:1)\rightsquigarrow (-60:270:1)\times (-60:270:0) \in \Theta_{\infty,1}$$
$$(0)=(0:1:0)\rightsquigarrow (0:1:0) \times (0:1:0) \in \Theta_{\infty,0}$$

\subsubsection{The exceptional divisors}

\begin{itemize}
\item The exceptional divisor $(P_{13})$ cuts $\Theta_{\infty,0}$ at the point $(3(20\sigma-3),-27(10\sigma-1),-1)\rightsquigarrow (-9,27,0)\times(-9,27,0)$.
\item The exceptional divisor $(P_{23})$ cuts $\Theta_{\infty,0}$ at the point $(3(20\sigma-3),27(\sigma^2-20\sigma+2,-1)\rightsquigarrow (-9,54,0)\times(-9,54,0)$.
\item The exceptional divisor $(P_{12})$ cuts $\Theta_{\infty,0}$ at the same point as the torsion section $(0)$.
\item The exceptional divisors $(P_{01})$ cuts $\Theta_{\infty,1}$ at the point $(-3\sigma(3\sigma-10),27\sigma^2,1)\rightsquigarrow (30,0,1)\times(30,0,0)$, the same point as $5s_6$.
\item The exceptional divisors $(P_{02})$ cuts $\Theta_{\infty,1}$ at the point $(-3\sigma(3\sigma-10),27\sigma(-10+102\sigma-20\sigma^2+\sigma^3),1)\rightsquigarrow (30,-270,1)\times(30,-270,0)$, the same point at $s_6$.
\item The exceptional divisors $(P_{03})$ cuts $\Theta_{\infty,1}$ at the point $(-60\sigma,270\sigma,1)\rightsquigarrow (-60,270,1)\times(-60,270,0)$, the same point as $3s_6$.
\end{itemize}

\subsection{The singular fiber $I_3$ over $s=\alpha$}
The rational components are the following
\begin{itemize}
\item $\Theta_{{\alpha},0}$ is the line defined by $(x+y=0, \quad t=\alpha z)$.
\item $\Theta_{{\alpha},1}$ is the line defined by $(y+z=0, \quad t=\alpha x)$.
\item $\Theta_{{\alpha},2}$ is the line defined by $(x+z=0, \quad t=\alpha y)$.
\end{itemize}
These components are defined over $\mathbb Q(\sqrt{6})$.
\begin{itemize}
\item The section $(s_6)$ cuts $\Theta_{\alpha,1}$.
\item The section $(2s_6)$ and the exceptional divisor$(P_{13})$ cut $\Theta_{\alpha,2}$ at the same point.
\item The section $(3s_6)$ cuts $\Theta_{\alpha,0}.$
\item The section $(4s_6)$ and the exceptional divisor $(P_{23})$ cut $\Theta_{\alpha,1}$ at the same point.
\item The section $(5s_6)$ cuts $\Theta_{\alpha,2}$.
\item The section $(0)$ and the exceptional divisor $(P_{12})$ cut $\Theta_{\alpha,0}$ at the same point.
\end{itemize}
The infinite section $\Sigma$ gives a point $(x:y:z)$ on $C_s$ satisfying
$$x+y=-\frac {1}{432} \frac {(s^2-10s+1)(2s-1)^2(5s-1)^2(8s-1)^2}{s^2},$$
thus cuts $\Theta_{\alpha,0}$.
\subsection{The singular fiber $I_3$ over $s=\beta$}
The rational components are the following
\begin{itemize}
\item $\Theta_{{\beta},0}$ is the line defined by $(x+y=0, \quad t=\beta z)$.
\item $\Theta_{{\beta},1}$ is the line defined by $(y+z=0, \quad t=\beta x)$.
\item $\Theta_{{\beta},2}$ is the line defined by $(x+z=0, \quad t=\beta y)$.
\end{itemize}
These components are defined over $\mathbb Q(\sqrt{6})$.
\begin{itemize}
\item The section $(s_6)$ cuts $\Theta_{\beta,1}$.
\item The section $(2s_6)$ and the exceptional divisor$(P_{13})$ cut $\Theta_{\beta,2}$ at the same point.
\item The section $(3s_6)$ cuts $\Theta_{\beta,0}$.
\item The section $(4s_6)$ and the exceptional divisor $(P_{23})$ cut $\Theta_{\beta,1}$ at the same point.
\item The section $(5s_6)$ cuts $\Theta_{\beta,2}$.
\item The section $(0)$ and the exceptional divisor $(P_{12})$ cut $\Theta_{\beta,0}$ at the same point.
\end{itemize}
As previously, the infinite section $\Sigma$ cuts $\Theta_{\beta,0}$.
\subsection{The singular fiber $I_2$ over $s=1/10$}
The component $\Theta_{1/10,0}$ is defined by $x+y+z=0 \qquad t=0$.

The component $\Theta_{1/10,1}$ is defined by $x+y+z=10t \qquad xy+xz+yz=0$.

These two components are defined over $\mathbb Q.$

The section $(2s_6)$ and the exceptional divisor$(P_{13})$ cut $\Theta_{1/10,0}$ at the same point.

The section $(4s_6)$ and the exceptional divisor $(P_{23})$ cut $\Theta_{1/10,0}$ at the same point.

The section $(0)$ and the exceptional divisor $(P_{12})$ cut $\Theta_{1/10,0}$ at the same point.

The sections $(s_6)$, $(3s_6)$ and $(5s_6)$ cut $\Theta_{1/10,1}$.

The infinite section $\Sigma$ gives a point $(x:y:z)$ on $C_s$ satisfying
$$x+y+z=\frac {1}{2951424s^2}(13-5\sqrt{-3}+(-145+37\sqrt{-3})s+244s^2)$$
$$(13+5\sqrt{-3}+(-145-37\sqrt{-3})s+244s^2)(28s-5+\sqrt{-3})(28s-5-\sqrt{-3})(10s-1),$$
thus cuts $\Theta_{1/10,0}$.

\bigskip

\subsection{The Gram matrix of the N\'eron-Severi lattice}

We shall see that the N\'eron-Severi lattice is generated by 

$$(o), \,\,\,f,\,\,\,(\Sigma),\,\,\,(s_6),\,\,\,\,\Theta_{0,i}, 1\leq i \leq 11, \,\,\,\Theta_{1/10,1},\,\,\,\Theta_{\infty,1},\,\,\, \Theta_{\alpha,1},\,\,\Theta_{\alpha,2},\,\,\Theta_{\beta,1},\,\,\Theta_{\beta,2}.$$

Moreover the only relation between these generators is computed to be

$$6((s_6)-(0))=12f-(5\Theta_{0,1}+10\Theta_{0,2}+9\Theta_{0,3}+8\Theta_{0,4}+7\Theta_{0,5}+6\Theta_{0,6}+5\Theta_{0,7}$$
$$4\Theta_{0,8}+3\Theta_{0,9}+6\Theta_{0,10}+\Theta_{0,11})-3\Theta_{\infty,1}-4\Theta_{\alpha,1}-2\Theta_{\alpha,2}-4\Theta_{\beta,1}-2\Theta_{\beta,2}-3\Theta_{1/10,1}.$$
To get the intersection numbers of the divisors,
we use the following result \cite{Sha}.

If $D$ denotes a divisor on the surface $V$, with arithmetic genus $p_a(D)$, then
$$p_a(D)=\frac {(D.(D+K))}{2}+1.$$
Moreover if the surface $V$ is $K3$, the class $K$ of the canonical bundle is $0$.

First we express the intersection matrix  of the divisors in the following order 
$$(o), \,\,\,f,\,\,\,(\Sigma),\,\,\,\,\Theta_{0,i}, 1\leq i \leq 11, \,\,\,\Theta_{1/10,1},\,\,\,\Theta_{\infty,1},\,\,\, \Theta_{\alpha,1},\,\,\Theta_{\alpha,2},\,\,\Theta_{\beta,1},\,\,\Theta_{\beta,2}.$$

$$
\left (
\begin{smallmatrix}
-2 & 1 & 1 & 0 & 0 & 0 & 0 & 0 & 0 & 0 & 0 & 0 & 0 & 0 & 0 & 0 & 0 & 0 & 0 & 0\\
1 & 0 & 1 & 0 & 0 & 0 & 0 & 0 & 0 & 0 & 0 & 0 & 0 & 0 & 0 & 0 & 0 & 0 & 0 & 0 \\
0 & 1  & -2 & 0 & 0 & 0 & 0 & 0 & 0 & 0 & 0 & 0 & 0 & 0 & 0 & 0 & 0 & 0 & 0 & 0 \\
0 & 0 & 0 & -2 & 1 & 0 & 0 & 0 & 0 & 0 & 0 & 0 & 0 & 0 & 0 & 0 & 0 & 0 & 0 & 0 \\
0 & 0 & 0 & 1 & -2 & 1 & 0 & 0 & 0 & 0 & 0 & 0 & 0 & 0 & 0 & 0 & 0 & 0 & 0 & 0 \\
0 & 0 & 0 & 0 & 1 & -2 & 1 & 0 & 0 & 0 & 0 & 0 & 0 & 0 & 0 & 0 & 0 & 0 & 0 & 0 \\
0 & 0 & 0 & 0 & 0 & 1 & -2 & 1 & 0 & 0 & 0 & 0 & 0 & 0 & 0 & 0 & 0 & 0 & 0 & 0 \\
0 & 0 & 0 & 0 & 0 & 0 & 1 & -2 & 1 & 0 & 0 & 0 & 0 & 0 & 0 & 0 & 0 & 0 & 0 & 0 \\
0 & 0 & 0 & 0 & 0 & 0 & 0 & 1 & -2 & 1 & 0 & 0 & 0 & 0 & 0 & 0 & 0 & 0 & 0 & 0 \\
0 & 0 & 0 & 0 & 0 & 0 & 0 & 0 & 1 & -2 & 1 & 0 & 0 & 0 & 0 & 0 & 0 & 0 & 0 & 0 \\
0 & 0 & 0 & 0 & 0 & 0 & 0 & 0 & 0 & 1 & -2 & 1 & 0 & 0 & 0 & 0 & 0 & 0 & 0 & 0 \\
0 & 0 & 0 & 0 & 0 & 0 & 0 & 0 & 0 & 0 & 1 & -2 & 1 & 0 & 0 & 0 & 0 & 0 & 0 & 0 \\
0 & 0 & 0 & 0 & 0 & 0 & 0 & 0 & 0 & 0 & 0 & 1 & -2 & 1 & 0 & 0 & 0 & 0 & 0 & 0 \\
0 & 0 & 0 & 0 & 0 & 0 & 0 & 0 & 0 & 0 & 0 & 0 & 1 & -2 & 0 & 0 & 0 & 0 & 0 & 0 \\
0 & 0 & 0 & 0 & 0 & 0 & 0 & 0 & 0 & 0 & 0 & 0 & 0 & 0 & -2 & 0 & 0 & 0 & 0 & 0 \\
0 & 0 & 0 & 0 & 0 & 0 & 0 & 0 & 0 & 0 & 0 & 0 & 0 & 0 & 0 & -2 & 0 & 0 & 0 & 0 \\
0 & 0 & 0 & 0 & 0 & 0 & 0 & 0 & 0 & 0 & 0 & 0 & 0 & 0 & 0 & 0 & -2 & 1 & 0 & 0 \\
0 & 0 & 0 & 0 & 0 & 0 & 0 & 0 & 0 & 0 & 0 & 0 & 0 & 0 & 0 & 0 & 1 & -2 & 0 & 0 \\
0 & 0 & 0 & 0 & 0 & 0 & 0 & 0 & 0 & 0 & 0 & 0 & 0 & 0 & 0 & 0 & 0 & 0 & -2 & 1 \\
0 & 0 & 0 & 0 & 0 & 0 & 0 & 0 & 0 & 0 & 0 & 0 & 0 & 0 & 0 & 0 & 0 & 0 & 1 & -2 
\end{smallmatrix}
\right) 
$$

This matrix has determinant $-6^2\times 72$.

Then we prove the following lemma.

\begin{lem}
The N\'eron-Severi group $NS( {Y_{10}})$ is generated by the classes of the following divisors:
$$(o), \,\,\,f,\,\,\,(\Sigma),\,\,(s_6),\,\,\Theta_{0,i}, 1\leq i \leq 11, \,\,\,\Theta_{1/10,1},\,\,\,\Theta_{\infty,1},\,\,\, \Theta_{\alpha,1},\,\,\Theta_{\alpha,2},\,\,\Theta_{\beta,1}.\,\,\Theta_{\beta,2}.$$
The only relation between these classes is:
$$6((s_6)-(0))=12f-(5\Theta_{0,1}+10\Theta_{0,2}+9\Theta_{0,3}+8\Theta_{0,4}+7\Theta_{0,5}+6\Theta_{0,6}+5\Theta_{0,7}$$
$$4\Theta_{0,8}+3\Theta_{0,9}+6\Theta_{0,10}+\Theta_{0,11})-3\Theta_{\infty,1}-4\Theta_{\alpha,1}-2\Theta_{\alpha,2}-4\Theta_{\beta,1}-2\Theta_{\beta,2}-3\Theta_{1/10,1}.$$
The determinant of the Picard lattice is equal to $-72$.

\end{lem}

\begin{proof}

Let  $\Lambda$ be the lattice generated by the classes of the divisors given in the lemma and $\Lambda'$ the sublattice generated by the same divisors minus $(s_6)$. From the computation above
$$\det \Lambda'=-6^2\times 72.$$
But the only relation between these classes is the one given previously. So $\Lambda'$ has index $6$ in $\Lambda$. Now either $\Lambda$ is the full N\'eron-Severi lattice or is of index $3$ in it. In the latter case, there would be an element of $(\Sigma)+k(s_6), \,\,\,k=0,...,5$ which is $3$-divisible in $NS({ Y_{10}})$. But taking the restrictions to the fibers over $0$ and $\infty$ rules out that possibility for $k\geq 1$.

If $\Sigma =3Q$, then either 
$$Q\,\, {\hbox{cuts}}\,\,\Theta_{0,0},\,\Theta_{\infty,0},\,\Theta_{\alpha,1},\,\Theta_{\beta,1},\,\Theta_{1/10,0},$$
or
$$Q\,\, {\hbox{cuts}}\,\,\Theta_{0,4},\,\Theta_{\infty,0},\,\Theta_{\alpha,1},\,\Theta_{\beta,1},\,\Theta_{1/10,0},$$
or cuts equivalent curves for the Gram matrix. But in all these cases the determinant of the Gram matrix is different from $-8$.
Hence $\det NS({Y_{10}})=-72$.

\end{proof}
\begin{rem}
Following Peters and Stienstra, \cite{PS} remark 7, the generic member of the family with $\rho =19$ has a transcendental lattice $T$ with basis $\{e_0,e_1,e_2 \}$ and Gram matrix
$\left(
\begin{matrix}
0 & 0 & 1\\
0 & 12 & 0\\
1 & 0 & 0 
\end{matrix}
\right)
$. For special members of the family with $\rho =20$, there exists a vector $pe_0+qe_1+re_2$, $p,\,q,\,r\, \in \mathbb Z$ which becomes algebraic. If $\tau$ is the corresponding modulus of the special member, this property is equivalent to the relation $-6p\tau^2+12q\tau+r=0$.

In our case the $\tau$ corresponding to $Y_{10}$ satisfies the relation $2\tau^2+1=0$ and $e'_0=e_0-3e_2$ becomes algebraic. Now take $T'$ a sublattice of $T$ with orthogonal basis $\{e'_0,e'_1,e'_2\}$ such that
$$
\begin{aligned}
e'_0= & e_0-3e_2\\
e'_1 = & e_1\\
e'_2 = & e_0+3e_2.
\end{aligned}
$$
Computing its Gram matrix, we find
$$Gram(e'_0,e'_1,e'_2)=\left(
\begin{matrix}
-6 & 0 & 0\\
0 & 12 & 0\\
0 & 0 & 6
\end{matrix}
\right).
$$
Finally, $\{e'_1,e'_2\}$ is a sublattice of $T_{Y_{10}}$ with Gram matrix
$$Gram(e'_1,e'_2)=\left(
\begin{matrix}
12  & 0\\
0 & 6
\end{matrix}
\right).
$$
Thus $\det(T_{Y_{10}})=72 {\hbox{ or }} 8$.

\end{rem}

\begin{lem}
\begin{enumerate}
\item Over the complex field, all the singular fibers are of type $I_n$.
\item Under the reduction modulo $p$, $p \neq 2$, $p \neq 3$, the singular fibers are of the same type. Their components are defined over $\mathbb F_p$ if $(\frac {6}{p})=1$. If  $(\frac {6}{p})=-1$, the Frobenius exchanges the corresponding fibers of type $I_3$ component by component. 
\end{enumerate}
\end{lem}

\bigskip
 \subsection{The reductions of $Y_{10}$ modulo $2$ and $3$}
The reduction of $Y_{10}$ modulo $2$ and $3$ is singular. By abuse of notation, when talking of the reduction modulo $2$ or $3$, we mean its desingularization.
\begin{theo}

\begin{enumerate}
\item The reduction of $Y_{10}$ modulo $2$ has singular fibers of type $I_{12}$, $I_4$, $IV^*$ over respectively $0$, $\infty$, $1$. Over the field $\bar{\mathbb F_2}$, it is a supersingular extremal elliptic $K3$-surface of discriminant $-2^2$, hence of Artin invariant $1$.
\item The reduction of $Y_{10}$ modulo $3$ has singular fibers of type $I_{12}$ over $0$, $I_6$ over $-1$ and $III$ over $\infty$ and $1$. 
  
\end{enumerate}
\end{theo}

\begin{proof}

1) In characteristic $2$, the discriminant is equal to $s^{12}(s-1)^8$. So there are singular fibers over $0$, $\infty$ and $1$.
\bigskip
Above $0$, the Weierstrass model is
$$Y'^2Z'+X'Y'Z'(s^2+1)=X'^3+X'^2Z's^2(1-s^2)-X'Z'^2s^6$$
The change variables $X'=X+s^7Z$, $Y'=Y+s^6Z$, $Z'=Z$  gives the model
$$
\begin{array}{l}
Y^2Z+(s^2+1)XYZ+(s^7+s^9)YZ^2= X^3+(s^2+s^4+s^7)X^2Z\\
+(s^8+s^{14})XZ^2 +(s^{12}+s^{15}+s^{16}+s^{18}+s^{21})Z^3
\end{array}
$$
satisfying $v(\gamma)=12$, $v(\beta)=8>6$, $v(\mu)=7>6$ and $v(\bar{\alpha})=v(\lambda^2+4\alpha)=0$.
Thus, by N\'eron's classification, it is an $I_{12}$ fiber, defined over $\mathbb F_2$ since $Y^2+\lambda_0 XY-\alpha_0 X^2=Y(Y+X)$.

\bigskip

Above $\infty$, setting $s=1/\sigma$, the Weierstrass model is
$$Y'^2Z'+X'Y'Z'(\sigma^2+1)=X'^3+X'^2Z'(1+\sigma^2)-X'Z'^2\sigma^2$$
The change variables $X'=X+s^3Z$, $Y'=Y+s^2Z$, $Z'=Z$ gives the model
$$
\begin{array}{l}
Y^2Z+(\sigma^2+1)XYZ+(\sigma^3+\sigma^5)YZ^2=X^3+(1+\sigma^2+\sigma^3)X^2Z\\
+(\sigma^4+\sigma^6)XZ^2+(\sigma^4+\sigma^6+\sigma^7+\sigma^8+\sigma^9)Z^3
\end{array}
$$
satisfying $v(\gamma)=4$, $v(\beta)=4>2$, $v(\mu)=3>2$ and $v(\bar{\alpha})=v(\lambda^2+4\alpha)=0$.
Thus, by N\'eron's classification, it is an $I_{4}$ fiber, defined over $\mathbb F_4$ since $Y^2+\lambda_0 XY-\alpha_0 X^2=Y^2+XY+X^2$.

\bigskip

Above $1$, we set $s+1=S$ and the Weierstrass model is
$$Y'^2Z'+X'Y'Z'S^2=X'^3+X'^2Z'S^2(1+S^2)+X'Z'^2(S+1)^6$$
The change variables $X'=X+Z$, $Y'=Y+X+S^2Z$, $Z'=Z$ gives the model
$$Y^2Z+S^2XYZ+S^2YZ^2=X^3+S^4X^2Z+S^6XZ^2+S^6Z^3$$
satisfying $v(\gamma)=6\geq 4$, $v(\beta)=6\geq 3$, $v(\alpha) \geq 2$, $v(\mu)\geq 2$ and $v(\bar{\gamma})=v(\mu^2+4\gamma)=4$.
Thus, by N\'eron's classification it is an $IV^*$ fiber, defined over $\mathbb F_4$ since the reduced cycle $A_3^0$ is defined by $Z(Y^2+\mu_2^0 YZ-\gamma_4^0 Z^2)=0$. Moreover, $\rho=2+11+3+6=22$, so the $K3$-surface is supersingular and extremal since it has a finite Mordell-Weil group. It fits Ito's table \cite{I}.

2) In characteristic $3$, the discriminant is equal to $s^{12}(s-1)^3(s-2)^6$. So there are singular fibers over $0$, $\infty$, $1$ and $2$.

Above $0$, the N\'eron's model is the reduction modulo $3$ of the N\'eron's model over $\mathbb C$
$$
\begin{array}{l}
Y^2Z+(s^2+1)XYZ-s^8YZ^2=X^3+(s-s^2-s^4)X^2Z\\
+(2s^7-s^{10})XZ^2+(-s^{12}+s^{14}-s^{16}-s^{18})Z^3
\end{array}
$$

satisfying $v(\gamma)=12$, $v(\beta)=7>6$, $v(\mu)=8>6$ and $v(\bar{\alpha})=v(\lambda^2+4\alpha)=0$.
Thus, by N\'eron's classification, it is an $I_{12}$ fiber, defined over $\mathbb F_3$ since $Y^2+\lambda_0 XY-\alpha_0 X^2=Y(Y+X)$.
\bigskip

Above $\infty$,  setting $s=1/\sigma$, the Weierstrass model is
$$Y'^2Z'+X'Y'Z'(\sigma^2-\sigma+1)=X'^3+X'^2Z'(\sigma^2-\sigma-1)+XZ'^2(\sigma-\sigma^2)$$
The change variables $X'=X+\sigma Z$, $Y'=Y+(\sigma+1) X$, $Z'=Z$ gives the model
$$
\begin{array}{l}
Y^2Z+(\sigma^2+\sigma )XYZ+(\sigma-\sigma^2+\sigma^3)YZ^2=\\
X^3-\sigma^3 X^2Z+(\sigma-\sigma^3-\sigma^4)XZ^2+(-\sigma^3+\sigma^4)Z^3
\end{array}
$$
satisfying $v(\beta)=1$ and $v(\gamma)\geq 2$.
Thus, by N\'eron's classification it is an $III$ fiber.
\bigskip

Above $1$, we set $s-1=S$ and the Weierstrass model is
$$Y'^2Z'+X'Y'Z'(S^2+S+1)=X'^3-X'^2Z'(S+1)^2(1+S^2)+X'Z'^2S(S^3+1)^2.$$
The change variables $X'=X+SZ$, $Y'=Y+(S+1)X$, $Z'=Z$ gives the model
$$
\begin{array}{l}
Y^2Z+S^2XYZ+(S+S^2+S^3)YZ^2=X^3+X^2Z(S^2-S^4)\\
+XZ^2(S+S^5+S^7)+(-S^3+S^4-S^6+S^8)Z^3
\end{array}
$$
satisfying $v(\gamma)\geq 2$, $v(\beta)=1$.
Thus, by N\'eron's classification, it is an $III$ fiber. 
\bigskip

Above $-1$, we set $s+1=S$ and the Weierstrass model is
$$Y'^2Z'+X'Y'Z'S^2=X'^3+X'^2Z'(S^2+1)-X'Z'^2(S^2-1).$$
The change variables $X'=X+(S^2+1)Z$, $Y'=Y+SX+S^2 Z$, $Z'=Z$ gives the model
$$Y^2Z+(S^2-S)XYZ+S^4 YZ^2=X^3+(1-S^3)X^2Z+(S^4-S^5)XZ^2+S^6 Z^3$$
satisfying $v(\gamma)=6$, $v(\beta)=4\geq 3$, $v(\mu)=4\geq 3$ and $v(\bar{\alpha)}=v(\lambda^2+4\alpha)=0$.
Thus, by N\'eron's classification, it is an $I_6$ fiber. 
\begin{rem}
We may also prove that in characteristic $3$, Beauville's surface has singular fibers $I_6$, $I_3$ and $III$ and use the previous quadratic base change.
\end{rem}

\end{proof}
\begin{rem}
The Galois group $Gal(\bar{\mathbb F_2}|\mathbb F_2)$ operates non trivially on $NS(Y_{10}/\mathbb F_2)$. This result agrees with the extension for $p=2$ (proved by Sch\" {u}tt \cite{Sch2}) of Artin's theorem valid for $p>2$.
\end{rem}

\subsection{The $L$-series }
\bigskip

If  $X$ denotes a $K3$ hypersurface, the $L^*$-series of $X$ is defined by
$$L^*(X,s):=\prod^* Z(X/\mathbb F_p,p^{-s})=\sum_{n\geq 1 } \frac {a(n)}{n^s},$$
where the product is taken over the $p$ of good reduction. Giving a suitable value to the  local factors corresponding to the bad primes, the $L$-series of the surface $X$ can be expressed in terms of the Mellin transform of a modular form.

Let us recall the formula
$$Z(X/\mathbb {F}_p,T)=\frac {1}{(1-T)(1-p^2T)P_2(T)}$$
with $P_2(T)\in \mathbb Z[T]$ of degree $22$ and $P_2(0)=1$.

\begin{lem}
For $Y_{10}$ and $p\neq 2$, $p\neq 3$,

$$P_2(T)=(1-pT)^{17}(1-(\frac {6}{p})pT)^2(1-(\frac {-3}{p})pT)Q_2(T),$$
where $(\frac {.}{p})$ is the  Legendre's symbol.
\end{lem}

\begin{proof}
It follows from lemma 2, if $(\frac {6}{p})=-1$, that the components $\Theta_{\alpha ,i}$ and  $\Theta_{\beta ,i}$, $i=1,2$ are not defined on the base field and the Frobenius exchanges them. Similarly, if $(\frac {-3}{p})=-1$, $\Sigma$ is not defined on $\mathbb F_p$. Therefore, $ \Theta_{\alpha ,i},  \Theta_{\beta ,i}$, $i=1,2$, $\Sigma$ and its conjugate generate a space of dimension $6$ invariant by the Frobenius $ F_p$. In this subspace, we have $\det (1-t F_p)=(1-p^2t^2)^3$. In its orthogonal space of dimension $14$ (for the intersection form) in the N\'eron-Severi lattice, the action of the Frobenius is the multiplication by $p$, since there is a base of cycles defined over the base field.
\end{proof}

If $P_2(T)=(1-pT)^{k_1}(1+pT)^{k_2}Q_2(T)$ with $Q_2(T)=(1-\lambda_1T)(1-\lambda_2T)\in \mathbb Z[T]$, then the number $N_q$ of $\mathbb F_q$-rational points of $X$ for $q=p^r$ is given by the formula
$$N_q=1+q^2+q(k_1+(-1)^rk_2)+A_q$$
with $A_q=\lambda_1^r+\lambda_2^r$.

Now we deduce, from lemma 3, the following corollary. In fact, $A_q$ is the trace of the Frobenius for the corresponding Galois representation. The formula of the corollary is nothing else than the Lefschetz fixed point formula.

\begin{cor} Let $p\neq 2,\,\, 3$.
$$N_q(Y_{10}) =  1+q^2+17q+2q((\frac {6}{p}))^r+q((\frac {-3}{p}))^r+A_q(Y_{10}).$$
\end{cor}

\bigskip

The computation of $N_q$ will finally give $A_q$ and the $L$-series.

\bigskip

\begin{lem}
Let $q=p^r$ and $p\neq 2,3$. We have the following congruences,
 
$$N_q(Y_{10}) \equiv 4q-4+(\frac {3}{p})^r+(\frac {2}{p})^r-2(\frac {6}{p})^r\,\,\,\,\,{\hbox {mod}}\quad 8 $$
$$A_q(Y_{10})  \equiv 3-q^2+3q-q(2(\frac {6}{p})^r+(\frac {-3}{p})^r)+(\frac {3}{p})^r+(\frac {2}{p})^r-2(\frac {6}{p})^r\,\,\,\,\,{\hbox {mod}} \quad 8.$$

\end{lem}

\begin{proof}
Let us define
$$Y:=\{ (x:y:z:t)\in \mathbb P^3 / xyz(x+y+z)+t^2(xy+xz+yz)-10xyzt=0 \}.$$
The $K3$ hypersurface $Y_{10}$ is obtained from $Y$ by blowing up the $7$ double points

$$P_{01}, P_{02}, P_{03}, P_{04}, P_{12}, P_{13}, P_{23}.$$
Let $\mathbb P(Y')$ the projective variety corresponding to 
$$Y':=\{(x:y:z:1)\in \mathbb P^3 /(x:y:z:1)\in Y, xyz\neq 0 \}.$$
First we count the points in $Y_{10}$ with at least one zero coordinate. We get:
\begin{itemize}
\item $q-2$ points with only one zero coordinate of the form $(x:y:z:0)$ with $x+y+z=0$,
\item $6(q-1)$ points with only two zero coordinates (without intersection points) on the double lines $(t=0, \quad x=0)$, $(t=0, \quad y=0)$, $(t=0, \quad z=0)$,\item $3(q-1)$ points with only two zero coordinates (without intersection points) on the lines $(x=0, \quad y=0)$, $(x=0, \quad z=0)$, $(y=0, \quad z=0)$,
\item $4$ points with three zero coordinates $P_{01}$, $P_{02}$, $P_{03}$, $P_{04}$,
\item $3$ points $P_{01}$, $P_{02}$, $P_{03}$, the intersection points of the double lines, 
\item $10q$ points coming from the blowing up of the $7$ double points and $P_{01}$, $P_{02}$ and $P_{03}$.
\end{itemize}
So we find $20q-4$ points with at least one zero coordinate and
$$N_q(Y_{10})=N_q(Y')+20q-4.$$

Now fix $z$, $z\neq 0,\pm 1$ and consider points with $x$ and $y$ $\neq 0, \pm 1$. Denote $N_1$ the number of such points.
If $(x:y:z:1)$ is such a point on the surface, there is also
$$(1/x:y:z:1) \quad (x:1/y:z:1) \quad (x:y:1/z:1) \quad (1/x:1/y:z:1)$$
$$(1/x;y:1/z:1) \quad (x:1/y:1/z:1) \quad (1/x:1/y:1/z:1).$$
Thus $N_1 \equiv 0\qquad (8)$.

Suppose $z=1$. Then $x+1/x+y+1/y=8$.
\begin{itemize}
\item If $x \neq y$ and $x \neq 1/y$, we get the points
$$(x:y:1:1) \quad (1/x:y:1:1) \quad (x:1/y:1:1) \quad (1/x:1/y:1:1)$$
together with their $6$ permuted. So this number is $\equiv 0 \quad (8)$.
\item If $x=y$ or $x=1/y$, then $x+1/x=4$.
So if $(\frac {3}{p})=1$ we get the points
$$(x:x:1:1) \quad (1/x:1/x:1:1) \quad (x:1/x:1:1) $$
and their permuted, that is $12$ points.
\end{itemize}
Suppose $z=-1$. Then $x+1/x+y+1/y=12$.
\begin{itemize}
\item If $x \neq y$ and $x \neq 1/y$, we get the points
$$(x:y:-1:1) \quad (1/x:y:-1:1) \quad (x:1/y:-1:1) \quad (1/x:1/y:-1:1)$$
together with their $6$ permuted. So this number is $\equiv 0 \quad (8)$.
\item If $x=y$ or $x=1/y$, then $x+1/x=6$.
So if $(\frac {2}{p})=1$ we get the points
$$(x:x:-1:1) \quad (1/x:1/x:-1:1) \quad (x:1/x:-1:1) $$
and their permuted that is $12$ points.
\end{itemize}
Now remain the cases

\begin{itemize}
\item (1:y:1:1)  and the permuted i.e. $y+1/y=6$; so if $(\frac {2}{p})=1$ we get $6$ points
\item (-1:y:1:1)  and the permuted i.e. $y+1/y=10$; so if $(\frac {6}{p})=1$ we get $12$ points
\item (-1:y:-1:1)  and the permuted i.e. $y+1/y=14$; so if $(\frac {3}{p})=1$ we get $6$ points
\end{itemize}

Counting all these points, we find
$$N_q(Y')=8t+9((\frac {3}{p})^r+(\frac {2}{p})^r)+6(\frac {6}{p})^r$$
and $$N_q(Y_{10})\equiv 4q-4+(\frac {3}{p})^r+(\frac {2}{p})^r-2(\frac {6}{p})^r\,\,\,\,\,{\hbox {mod}} \quad 8 .$$

Thus by corollary 1
$$A_q(Y_{10})  \equiv 3-q^2+3q-q(2(\frac {6}{p})^r+(\frac {-3}{p})^r))+(\frac {3}{p})^r+(\frac {2}{p})^r-2(\frac {6}{p})^r\,\,\,\,\,{\hbox {mod}} \quad 8.$$

\end{proof}
\begin{lem}
$$L(Y_{10},3)=\frac {1}{2}\sum'_{k,m}\frac {k^2-2m^2}{(k^2+2m^2)^3}=L(f,3).$$
\end{lem}

From Shioda-Inose \cite {SI}, since we know the lattice of transcendental cycles, the zeta function of $Y_{10}$ on $\mathbb F_q$ can be expressed in terms of the zeta function of a model of the elliptic curve $\mathbb C/\mathbb Z+\mathbb Z\sqrt{-2}$.

Computing $N_p$ for small p, we find the corresponding $A_p$ with corollary 1.
\begin{center}
\begin{tabular}{|l||r|r|r|r|r|r|}
\hline

p & 5 & 7 & 11 & 13 & 17 & 19 \\ \hline
$A_p$ & 0 & 0 & 14 & 0 & 2 & -34\\ \hline
\end{tabular}
\end{center}

By inspection, these traces coincide with the first Fourier coefficients of the newform $f=\sum_n a_n q^n$ of level $8$ and weight $3$ from [\cite{Sch1}, Tab. 1]. We shall prove that it holds for every prime.

Following now the method of Stienstra and Beukers \cite {SB}, denoting $A_{p^r}:=\lambda_1^r+\lambda_2^r$, one finds the following possibilities.

$$
\begin{array}{lccl}
{\hbox {if}} &  p=a^2+2b^2 (p \equiv 1, \,\, 3 \,\, {\hbox {mod}}8), &  A_p=\pm 2(a^2-2b^2)  &  \lambda_1 \lambda_2=\pm p^2\\
{\hbox {if}} & p\equiv 5,\,\, 7 \,\, {\hbox {mod}}8,                & {\hbox { either}} \,\,  A_p=0   &  \lambda_1\lambda_2=\pm p^2\\   
             &                                                         & {\hbox {or}}\,\, A_p=\pm 2p, \pm p  &  \lambda_1\lambda_2=p^2.
\end{array}
$$

By lemma 7, if $p\equiv 1 \,\,\,\,{\hbox {or}}\,\,\,3 \,\,\,\, {\hbox {mod}} \quad 8$, then $A_p(Y_{10})\equiv 2p \,\,\,\, {\hbox {mod}} \quad 8$, $A_{p^2} \equiv 2 \,\,\,\, {\hbox {mod}} \quad 8$ hence $A_p(Y_{10})=2(a^2-2b^2)$ and $\lambda_1 \lambda_2 =p^2$.

Furthermore, if  $p\equiv 5 \,\,\,\,{\hbox {or}}\,\,\,7 \,\,\,\, {\hbox {mod}}\quad 8$, then $A_p(Y_{10})\equiv 0 \,\,\,\, {\hbox {mod}} \quad 8$, $A_{p^2} \equiv 2 \,\,\,\, {\hbox {mod}} \quad 8$ hence $A_p(Y_{10})=0$ and $\lambda_1 \lambda_2=-p^2$.

Thus we obtain,
$$
\begin{array}{lccl}
{\hbox {if}} &  p=a^2+2b^2 (p \equiv 1, \,\, 3 \,\, {\hbox {mod}} \quad 8), &  A_p(Y_{10})=2(a^2-2b^2)  &  \lambda_1 \lambda_2= p^2\\
{\hbox {if}} & p\equiv 5,\,\, 7 \,\, {\hbox {mod}} \quad 8,                &   A_p(Y_{10})= 0   &  \lambda_1\lambda_2=-p^2.   
\end{array}
$$

Now, with a good choice of local factors for $p=2$,  $p=3$,  this achieves the proof of theorem 1 1).

\subsection{Proof of theorem 1 2)}

Denote as in \cite{Ber2} 

$$D_{j\tau}=(mj\tau+\kappa)(mj\bar {\tau}+\kappa).$$
Then
$$\begin{aligned}
m(P_k)=\frac {\Im \tau}{8\pi^3}\sum'_{m,\kappa} & [-4\frac
{(m(\tau+\bar {\tau})+2\kappa)^2}{D_{\tau}^3}+\frac {4}{D_{\tau}^2}\\
                                             & +16\frac
{(2m(\tau+\bar {\tau})+2\kappa)^2}{D_{2\tau}^3}-\frac
{16}{D_{2\tau}^2}\\
                                             & -36\frac
{(3m(\tau+\bar {\tau})+2\kappa)^2}{D_{3\tau}^3}+\frac
{36}{D_{3\tau}^2}\\
                                             & +144\frac
{(6m(\tau+\bar {\tau})+2\kappa)^2}{D_{6\tau}^3}-\frac
{144}{D_{6\tau}^2}]
\end{aligned}$$
 For $k=10$, we get $\tau=\frac {-i}{\sqrt {2}}$ and 
$$D_\tau=\frac {1}{2}(m^2+2\kappa^2)$$
$$D_{2\tau}=2m^2+\kappa^2$$
$$D_{3\tau}=\frac {1}{2}(9m^2+2\kappa^2)$$
$$D_{6\tau}=18m^2+\kappa^2.$$
Thus

$$m(P_{10})=\frac {\sqrt {2}}{16\pi^3}[16\times 4\sum'_{m,\kappa}\frac
{m^2-2\kappa ^2}{(m^2+2\kappa ^2)^3}$$
$$-36\times 8 \frac {4k^2}{(9m^2+2k^2)^3}+36 \times 4 \frac {1}{(9m^2+2k^2)^2}$$
$$-36\times 4 \frac {4k^2}{(18m^2+k^2)^3}+36 \times 4 \frac {1}{(18m^2+k^2)^2}].$$

This, in turn, can be written as
$$m(P_{10})=\frac {\sqrt {2}}{16\pi^3}[16\times 4\sum'_{m,\kappa}\frac
{m^2-2\kappa ^2}{(m^2+2\kappa ^2)^3}$$
$$+36\times 8 \frac {9m^2-2\kappa^2}{(9m^2+2\kappa^2)^3}-36 \times 4 \frac {1}{(9m^2+2\kappa^2)^2}$$
$$+36\times 8 \frac {\kappa^2-18m^2}{(\kappa ^2+18m^2)^3}+36 \times 4 \frac {1}{(18m^2+\kappa^2)^2}].$$

\begin{lem}(Zagier \cite{Za})
The following equalities hold:

\begin{enumerate}

 \item$$A(s):= \sum'(-\frac {1}{(9m^2+2k^2)^s}+\frac {1}{(k^2+18m^2)^s}) =2\sum_{n=1}^{\infty} \left ( \frac {-3}{n} \right ) \frac {r_n}{n^s}$$
where 
$$r_n:=\frac {1}{2} \# \{ (k,m) / k^2+2m^2=n \};$$

more precisely
$$A(2)=\frac {\pi^2}{2\sqrt{6}}L(\chi_{-3},2).$$

\item
$$\sum' \frac {k^2-18m^2}{(k^2+18m^2)^s} +\sum' \frac {9m^2-2k^2}{(9m^2+2k^2)^s}=(1+\frac {2}{3^s}+\frac {27}{3^{2s}})\sum'\frac {m^2-2k^2}{(m^2+2k^2)^s}.$$
\end{enumerate}

\end{lem}
\begin{proof}
1) Consider $n=k^2+2m^2$. 

 \begin{itemize}
\item If $3\nmid k$ and $3\mid m$, then $k^2+2m^2=k^2+18m'^2 \equiv 1 \mod 3$.
\item If $3\mid k$ and $3\nmid m$, then $k^2+2m^2=9k'^2+2m^2 \equiv -1 \mod 3$.
\item In the two other cases, $3\mid n$.
\end{itemize}
Thus 
$$A(s)=2\sum_{n=1}^{\infty} \left ( \frac {-3}{n} \right ) \frac {r_n}{n^s}=2L(\chi_{-3},s)L(\chi_{24},s)$$
since
$$\sum_{n=1}^{\infty} \frac {r_n}{n^s}=\zeta (s) L(\chi_{-8},s).$$
If $s=2$, since $2L(\chi_{24},2)=\frac {\pi^2}{2\sqrt{6}}$, we deduce

$$A(2)=\frac {\pi^2}{2\sqrt{6}}L(\chi_{-3},2).$$
2) Denote
$$B(s):=\sum' \frac {k^2-18m^2}{(k^2+18m^2)^s} +\sum' \frac {9m^2-2k^2}{(9m^2+2k^2)^s}$$
and define
$$S:=\frac {1}{2} \sum'_{k,m} \frac {k^2-2m^2}{(k^2+2m^2)^s}.$$
Now 
$$S=\sum_{n=1}^{\infty} \frac {a_n}{n^s}=L(f,s)$$
with $f$ a cusp form of weight $3$ and level $8$.
Moreover, the $L$ function $L(f,s)$ has an Euler product expansion
$$L(s,f)=\frac {1}{1-a_3 3^{-s}+3^{3-1-2s}}\prod_{p\neq 3}L_p(f,s).$$
Since $a_3=-2$, it follows
$$L(s,f)=\frac {1}{1+\frac {2}{ 3^s}+\frac {9}{3^{2s}}}\prod_{p\neq 3}L_p(f,s).$$
Now
$$B(s)=\sum'_{3\mid k, 3\nmid m} \frac {m^2-2k^2}{(m^2+2k^2)^s}+\sum'_{3\mid m, 3\nmid k} \frac {m^2-2k^2}{(m^2+2k^2)^s}+2\sum'_{3\mid k, 3\mid m} \frac {m^2-2k^2}{(m^2+2k^2)^s},$$
$$\sum'_{3\mid k, 3\nmid m} \frac {m^2-2k^2}{(m^2+2k^2)^s}+\sum'_{3\nmid k, 3\mid m} \frac {m^2-2k^2}{(m^2+2k^2)^s}=2(1+\frac {2}{3^s}+\frac {9}{3^{2s}})S$$
and

$$\sum'_{3\mid k, 3\mid m} \frac {m^2-2k^2}{(m^2+2k^2)^s}=9^{1-s}\sum' \frac {m^2-2k^2}{(m^2+2k^2)^s}.$$
So,
$$\begin{aligned}
B(s) &=2S(1+\frac {2}{3^s}+\frac {9}{3^{2s}})+4\times 9^{1-s} S\\
     &=2S(1+\frac {2}{3^s}+\frac {27}{9^s}).
\end{aligned}
$$

\end{proof}

Thus, by the previous lemma, we get
$$m(P_{10})=2d_3+\frac {3\times 8\sqrt{2}}{\pi^3} \sum' \frac {m^2-2\kappa^2}{(m^2+2\kappa^2)^3}$$
$$=2d_3+3m(P_2).$$
This is precisely the relation guessed numerically by Boyd \cite{Bo}
and finally

$$m(P_{10})=2d_3+\frac {1}{9}\frac {\left |\det T_{Y_{10}} \right |^{3/2}}{\pi^3} L(Y_{10},3),$$
that achieves the proof of theorem 1 2).
\end{proof}

\bigskip
\bigskip
\textbf{Acknowledgements}

I thank J. Lewis and N. Yui for their invitation to the CMS Winter meeting in Toronto (December 2006). This was a strong motivation for writing this work and an opportunity for meeting experts in the domain. So, I am particularly indebted to M. Sch\"{u}tt for his thorough reading of the preprint and his precious advice for the correction of some mistakes. My thanks go also to D. Boyd, O. Lecacheux and D. Zagier for their varied and valuable help.

\bigskip
\bigskip

\bigskip
\bigskip

\end{document}